\pgfplotsset{small,compat=newest}
\numberwithin{equation}{section}
\newcommand{\norm}[1]{\left\|#1\right\|}
\newcommand{\normL}[2]{\norm{#1}_{#2}}
\newtheorem{thm}{Theorem}[section]
\newtheorem{prp}[thm]{Proposition}
\newtheorem{lem}[thm]{Lemma}
\newtheorem{asp}[thm]{Assumption}
\theoremstyle{definition}
\newtheorem{example}[thm]{Example}
\theoremstyle{remark}
\newtheorem{rem}[thm]{Remark}
\numberwithin{equation}{section}
\def\ie{\emph{i.e.}}
\def\eg{\emph{e.g.}}
\def\spn{\mathrm{span}}
\def\ms{\mathrm{ms}}
\def\dive{\mathrm{\nabla\cdot}}
\def\e{\small{\mathrm{e}}}
\def\feac{\quad\mbox{for each }}
\def\fora{\quad\mbox{for all }}
\def\Or{{\cal O}}
\def\p{\partial}
\def\pt{\p_{t}}
\def\ptt{\p^{2}_{tt}}
\def\gd{\nabla}
\def\k{\kappa}
\def\b{\boldsymbol{\beta}}
\def\bo{\b_{1}}
\def\bt{\b_{2}}
\def\uo{u_{1}}
\def\ut{u_{2}}
\def\wk{\widetilde{\k}}
\def\x{\mathrm{x}}
\def\R{\mathbb{R}}
\def\N{\mathbb{N}}
\def\Cx{\mathbb{C}}
\def\V{\mathrm{V}}
\def\H{\mathrm{H}}
\def\L{\mathrm{L}}
\def\W{\mathrm{W}}
\def\b{\boldsymbol{\beta}}
\def\VeI{\V^{I}_{\ell}}
\def\WeI{\W^{I}_{\ell}}
\def\Ho{\H^{1}}
\def\Hoz{\H^{1}_{0}}
\def\HoD{\Ho(D)}
\def\HozD{\Hoz(D)}
\def\HokD{\H^{1}_{\k}(D)}
\def\Hok{\L^{2}_{\k}}
\def\Lt{\L^{2}}
\def\LtD{\Lt(D)}
\def\Ltk{\Lt_{\k}}
\def\LtkD{\Ltk(D)}
\def\Linf{\L^{\infty}}
\def\LinfD{\Linf(D)}
\def\Vh{\V_{h}}
\def\Vmslew{\V_{\ms,\ell}}
\def\Kj{K_{j}}
\def\oi{\omega_{i}}
\def\TH{{\cal T}_{H}}
\def\Th{{\cal T}_{h}}
\def\FH{{\cal F}_{H}}
\def\Fh{{\cal F}_{h}}
\def\Po{{\cal P}_{1}}
\def\uh{u_{h}}
\def\vh{v_{h}}
\def\ums{u_{\ms}}
\def\vmsl{v_{\ms,\ell}}
\def\umsl{u_{\ms,\ell}}
\def\unmsl{\umsl^{n}}
\def\unpmsl{\umsl^{n+1}}
\def\ukmsl{\umsl^{k}}
\def\wmsl{w_{\ms,\ell}}
\def\umslew{u_{\ms,\ell}}
\def\uiI{u^{i,\mathrm{I}}}
\def\uiII{u^{i,\mathrm{II}}}
\def\uI{u^{\mathrm{I}}}
\def\uII{u^{\mathrm{II}}}
\def\uIImsl{\uII_{\ms,\ell}}
\def\Lc{{\cal L}}
\def\Linv{\Lc^{-1}}
\def\Li{\Lc_{i}}
\def\Liinv{\Li^{-1}}
\def\Ihpi{{\cal I}^{\p,i}_{h}}
\def\Or{{\cal O}}
\def\Pl{{\cal P}_{\ell}}
\def\Pil{{\cal P}_{i,\ell}}
\def\dt{\Delta t}
\def\Im{I}
\def\Lh{L_{h}}
\def\Lms{L_{\ms}}
\def\Rh{R_{h}}
\def\Rms{R_{\ms}}
\def\Pms{P_{\ms}}
\def\unmsl{\umsl^{n}}
\def\unpmsl{\umsl^{n+1}}
\def\ukms{\ums^{k}}
\def\ukmsl{\umsl^{k}}
\newcommand{\noteLeo}[1]{{\color{DarkBlue}{#1}}}
\newcommand{\dx}{{\mathrm{d}\x}}
\newcommand{\roma}{\mathrm{I}}
\newcommand{\romb}{\mathrm{II}}
\newcommand{\Cov}{C_{\mathrm{ov}}}
\title{Edge multiscale finite element methods for semilinear parabolic problems with heterogeneous coefficients}
\author{Leonardo A. Poveda\thanks{College of Science, Mathematics and Technology, Wenzhou-Kean University, Wenzhou, Zhejiang 325060, P. R. China (lpovedac@wku.edu.cn)} \and Shubin Fu\thanks{Eastern Institute for Advanced Study, Eastern Institute of Technology, Ningbo, 
Zhejiang 315200, P. R. China (sfu@eitech.edu.cn)} \and Guanglian Li\thanks{Department of Mathematics, University of Hong Kong, Pokfulam, Hong Kong (lotusli@maths.hku.hk).} \and Eric T. Chung\thanks{Department of Mathematics, The Chinese University of Hong Kong, Hong Kong (eric.t.chung@cuhk.edu.hk)}}
\begin{document}
\maketitle
%\tableofcontents
%\linenumbers

\begin{abstract}
\pretolerance = 2000
We develop a new spatial semidiscrete multiscale method based upon the edge multiscale methods to solve semilinear parabolic problems with heterogeneous coefficients and smooth initial data. This method allows for a cheap spatial discretization, which fails to resolve the spatial heterogeneity but maintains satisfactory accuracy independent of the heterogeneity. This is achieved by simultaneously constructing a steady-state multiscale ansatz space with certain approximation properties for the evolving solution and the initial data. The approximation properties of the multiscale ansatz space are derived using local-global splitting. A fully discrete scheme is analyzed using a first-order explicit exponential Euler scheme. We derive the error estimates in the $\Lt$-norm and energy norm under the regularity assumptions for the semilinear term. The convergence rates depend on the coarse grid size and the level parameter. Finally, extensive numerical experiments are carried out to validate the efficiency of the proposed method.
\end{abstract}

\section{Introduction}
\pretolerance = 2000
%Efficient numerical approximations for time-dependent partial differential equations are fundamental to many important scientific and engineering applications, especially models involving complex properties such as the phase field problems in the Allen-Cahn equation, which describes phase transition and separation \citep{Allen1979microscopic}. Other examples include the Ginszburg-Landau equation that studies the behavior of superconductivity \citep{caliari2024efficient}, the Navier-Stokes equations in highly heterogeneous media, which are used for the study of fluid dynamics and multi-phase problems \citep{muljadi2015nonconforming}. Among them is the semilinear parabolic equations, which capture many chemical, physical, and biological phenomena \citep{caliari2024second}. 

We consider in this paper the efficient multiscale method for a semilinear parabolic equation with heterogeneous coefficient $\k$ in the form
\begin{align*}
\pt u - \dive(\k\gd u) +\b\cdot\gd u = R(\x,u) %&&\text{ in } D\times (0,T]
\end{align*}
with certain boundary conditions and initial conditions. Here, $R(\x,u)$ denotes the reaction term that can be nonlinear. This model type occurs in many applications, such as the phase field problems in the Allen-Cahn equation, which describes phase transition and separation \citep{Allen1979microscopic}. Other examples include the Ginszburg-Landau equation that studies the behavior of superconductivity \citep{caliari2024efficient}, the Navier-Stokes equations in highly heterogeneous media, which are used for the study of fluid dynamics and multiphase problems \citep{muljadi2015nonconforming}.

Classical numerical methods, such as finite element methods, resolve the coefficient's heterogeneity to obtain a solution with sufficient accuracy, resulting in huge computational complexity. 
During the last few decades, many multiscale model reduction techniques have been developed and analyzed to make computational complexity independent of the multiple scales, such as multiscale finite element methods \citep{hou1997multiscale}, multiscale variational methods \citep{ Hughes1998variational}, heterogeneous multiscale methods \citep{ weinan2003heterogeneous}, multiscale mortar methods \citep{Arbogast2007multiscale}, localized orthogonal decomposition methods \citep{malqvist2014localization}, generalized multiscale finite element methods \citep{efendiev2013generalized,chung2023multiscale}. These methods have demonstrated extremely satisfactory numerical results for various problems and have become increasingly popular. In this work, we will consider Edge Multiscale Methods (EMsFEM), which was initially proposed in \cite{li2019convergence,fu2019edge} and has been applied to several linear problems with heterogeneous coefficients \cite{fu2021wavelet,li2021wavelet,fu2023wavelet,li2024wavelet}. Its main idea is to first construct a local splitting of the solution in a sequence of overlapping subdomains, and then establish the global splitting by means of the partition of unity functions \cite{melenk1996partition}. In this manner, the global error estimate is determined by the local error estimate in each subdomain. Consequently, determining the local error estimate is a crucial task in the analysis. Its proof is inspired by the transposition method that provides {\em a priori }estimate in weighted $L^2$-norm of the homogeneous elliptic problem with nonhomogeneous Dirichlet boundary data in weighted $\Lt$-norm \cite{MR0350177}. The semilinearity makes the local splitting nontrivial, and the multiscale basis functions potentially rely on the nonlinear term. However, upon assuming the boundedness of the first-order partial derivative of the nonlinear term with respect to the solution, we prove that the multiscale space is independent of the semilinear term, and thus, one multiscale space is sufficient for all time steps. The local error estimate is presented in Lemma \ref{lem:Pl-interp}. Based upon this, we can derive an error estimate for the elliptic projection in Lemma \ref{lem:riesz}, then a standard error estimate yields the semidiscrete error estimate in Theorem \ref{thm:theta-estimate}.

%In WEMsFEM, multiscale wavelet basis functions approximate the solution over the coarse skeleton. Since wavelets can approximate functions with very low regularities, their approximation properties are characterized by the size of the finest level, given by the intrinsically constructed hierarchical structure. So, wavelets are good candidates for approximating functions with low regularities \cite{fu2019edge}. 

Moreover, standard numerical approaches such as implicit methods have been proposed and analyzed for temporal discretization, which are attractive due to their unconditional stability. However, they involve solving nonlinear equations at each time step and thus are computationally expensive. In contrast, Exponential integrators are robust, with the linear part handled exactly and the non-linear part explicitly \citep{hochbruck2010exponential,caliari2024accelerating,contreras2023exponential,poveda2024second}, and we explore their performance for this semilinear parabolic problem. We present a rigorous convergence analysis of the proposed method under smooth initial data and certain regularity of the semilinear term in Theorem \ref{thm:main-result}, validated by extensive numerical experiments. 

The outline of the paper is as follows. Section~\ref{sec:problem-statement} is dedicated to the problem of the model and its spatial discretization. The multiscale edge space is constructed in Section~\ref{sec:edge-multiscale}.  Section~\ref{sec:hierarchical} presents the construction of the hierarchical bases. In Section \ref{sec:full-discretization}, we present the explicit exponential method. Under appropriate assumptions of the exact solution and the nonlinear reaction term, we show the fully discrete error analysis of the proposed approach. Numerical experiments are provided in Section~\ref{sec:numerical-exp}. Finally, conclusions and final comments are drawn in Section~\ref{sec:conclusion}.

\section{Problem setting}
\label{sec:problem-statement}
We are concerned with the semilinear parabolic problem
\begin{equation}\label{eq:main-prob}
\begin{aligned}
\pt u - \dive(\k\gd u) +\b\cdot\gd u &= R(\cdot,u) &&\text{ in } D\times (0,T]\\
u(\cdot,0)&=u_{0}&&\text{ in } D\\
u(\cdot,t)&=0&&\text{ on }\partial D\times (0,T].
\end{aligned}
\end{equation}
%Here, $D\subset\Rd$ is a bounded domain with $\partial D$ being either smooth or a convex polygon for $d\in\{2,3\}$. 
Let $D$ be a domain with $C^{1,\alpha}$, $(0<\alpha<1)$ boundary $\p D$, and $\{D_{i}\}_{i=1}^{m}\subset D$ be $M$ pairwise disjoint strictly convex open subsets, each with a $C^{1,\alpha}$ boundary $\Gamma_{i}:=\p D_{i}$, and denote $D_u{0}=D\setminus\overline{\bigcup_{i=1}^{m}D_{i}}$. Let the permeability coefficient $\k$ be a piecewise regular function defined by
\[
\k(\x)=\begin{cases}
\eta_{i}(\x),&  \mbox{in }D_{i},\\
1, & \mbox{in }D_{0}.
\end{cases}
\]
Here, $\eta_{i}\in C^{0,\nu}(\overline{D}_{i})$ with $\nu\in(0,1)$ for $i=1,\dots,m$. We denote $\eta_{\min}:=\min_{i}\{\|\eta_{i}\|_{C^0(D_i)}\}\geq 1$ and $\eta_{\max}:=\max_{i}\{\|\eta_{i}\|_{C^{0}(D_{i})}\}$. We further assume that $\eta_{\min}\gg 1$, and $\eta_{\min}$ has the same magnitude as $\eta_{\max}$. $\b\in[\LinfD]^{d}$ represents the advective vector field, which is incompressible, \ie, $\gd \cdot\b=0$. $R(\x,u)$ is often interpreted as a reaction term (possibly non-linear), assuming smooth on $D\times\R$, and $R(\x,\cdot)$ is uniformly Lipschitz continuous on $D$. 

We assume the initial data $u_{0}\in \dot{\H}^{2}(D)$, with 
\begin{align*}
\dot{\H}^{2}(D):=\left\{v\in \V:=\HozD: \mathcal{L}v:=-\nabla\cdot(\kappa\nabla v)+\b\cdot\gd v\in\LtD\right\}.
\end{align*}

%For a subdomain $K\subset D$, we denote the restriction of $\V$ on $K$ by $\V(K)$, and the subspace of $\V(K)$ with zero trace on $\p K$ by $\Vz(K)$. 

%We consider the following assumption.
%\begin{asp}\label{ass:domain}
%\end{asp}
Throughout this paper, $x\preceq y$ means a positive constant $C$ exists independent of $\kappa$ and mesh size $h$ to be introduced, such that $x\leq Cy$, and $x\simeq y$ means $y\preceq x \preceq y$.
\begin{comment}
We assume that the function $R$ is continuous and satisfies a local Lipschitz condition with respect to $u$, see \cite{Larsson92}. If $d=2$ or $3$, we assume in addition that the Lipschitz condition is of the form
\[
|R(\x,u)-R(\x,v)|\preceq \left(1+|u|^{\gamma}+|v|^{\gamma}\right)|u-v|,\quad\text{for all } u,v\in\R,\x\in D,
\]
with $\gamma =2$ if $d=3$, and $\gamma\in[0,\infty)$ if $d=2$.
\end{comment}

Next, we recall the following assumptions related to the mild growth condition for the reaction term $R$ and the regularity estimates for the solution $u$, see \cite{thomee2006galerkin}.
\begin{asp}
\label{asp:01}
The function $R(\x,v)$ grows mildly with respect to $v$, i.e., there exists a number $p>0$ for $d=2$ or $p\in(0,2]$ for $d=3$ such that,
\begin{equation}
\label{eq:asp1}
\left|\frac{\partial R}{\partial v}(\x,v)\right|\preceq 1 + |v|^{p},\quad\feac v\in \mathbb{R}\text{ and }\x\in D.
\end{equation}
Moreover, $R(\x,0)=0$ for all  $\x\in D$.
\end{asp}
%\noteLeo{
\begin{lem}[{\citealp[Chapter 14]{thomee2006galerkin}}]
\label{lem:02}
Let $u_0\in \dot{\H}^{2}(D)$, and let Assumption \ref{asp:01} hold. Then Problem \eqref{eq:main-prob} is well-posed, and its solution $u(\cdot,t)$ has the following regularity properties,
\begin{subequations}
\begin{align}
\|\mathcal{L}u(\cdot,t)\|_{\LtD}&\preceq 1,&&\text{ for }t\in[0,T],\\
\|\pt u(\cdot,t)\|_{\LtD}&\preceq 1,&&\text{ for }t\in[0,T],\\
\|\mathcal{L}^{1/2}\pt u(\cdot,t)\|_{\LtD}&\preceq t^{-1/2},&&\text{ for }t\in(0,T]\label{eq:3-lem2}.
\end{align}
\end{subequations}
\end{lem}
%\QLeo{Is it necessary to include the proof of these estimates?}\textcolor{red}{No need, but cite it properly.}
\begin{lem}[{\citealp[Lemma 2.1]{Larsson92}}]
\label{lem:local-R}
Suppose the function $R$ satisfies Assumption \ref{asp:01},  then $R$ is locally Lipschitz continuous in a strip along the exact solution $u(\cdot,t)$, meaning that for any fixed constant $C>0$, there holds
\begin{subequations}
\begin{align}
\|R(\cdot,v)-R(\cdot,w)\|_{\H^{-1}(D)}&\preceq \|v-w\|_{\LtD}\label{eq1:lem2}\\
\|R(\cdot,v)-R(\cdot,w)\|_{\LtD}&\preceq \|v-w\|_{\HoD}\label{eq2:lem2}
\end{align}
\end{subequations}
for any $t\in[0,T]$ and $v,w\in\V$ satisfying $\max\left\{\|(v-u(\cdot,t)\|_{\HoD},\|w-u(\cdot,t)\|_{\HoD}\right\}\leq C$, where the hidden constants may depend on $C$.
\end{lem}
%\textcolor{red}{Why not cite \cite[Lemma 2.1]{Larsson92} as before?}\noteLeo{It is another version by using the mean-value theorem, but we can leave it as before}
%}

%\subsection{Weak formulation}
Next, we introduce the weak formulation of \eqref{eq:main-prob}, which reads as seeking $u(\cdot,t)\in\V$ such that
\begin{equation}
\label{eq:weak-prob}
\begin{aligned}
(\pt u,v)+A(u,v)&=(R(\cdot,u),v),\quad\text{ for all } v\in \V,\\
u(\cdot,0)&= u_{0},
\end{aligned}
\end{equation} 
with $(\cdot,\cdot)$ denoting the inner product in $L^2(D)$,  $A(u,v):=a(u,v)+(\b\cdot\gd u,v)$, and 
\[
a(u,v):=\int_{D}\k\gd u\cdot\gd v\dx.
\]
%Here, $\Ph:\L^{2}(D)\to \Vh$ is the $L^2$-projection defined by
%\begin{align}\label{eq:l2-proj}
%( u, v) = (\Ph u,v),\feac v\in\Vh.
%\end{align}
%Since that $\gd\cdot\b=0$, we take $u=v$ and use an integration by parts to obtain
%\[
%(\b\cdot\gd v,v)=\int_{D}\left(\b(\x)\cdot\gd v\right)v\dx=\int_{D}\frac{1}{2}\gd\cdot\left(\b(\x) v^{2}\right)\dx.
%\]
Since that $\gd\cdot\b=0$, the bilinear form $A(\cdot,\cdot)$ in \eqref{eq:weak-prob} is $\V$-elliptic, \ie,
\begin{align}\label{eq:coercive}
A(v,v)=\|\nabla v\|^{2}_{\LtkD},\feac v\in\V.
\end{align}
Here, $\|\nabla v\|^{2}_{\LtkD}:=a(v,v)$.

An application of the Friedrichs' inequality implies the boundedness of the bilinear form $A(\cdot,\cdot)$,
\[
|A(u,v)|\leq C\|\nabla u\|_{\LtkD}\|\nabla v\|_{\LtkD},
\] 
where the positive constant $C$ depends on diameter of the domain $D$ and $\|\b\|_{L^{\infty}(D)}$. 

%\subsection{Semi-discretization in the spatial domain $D$}
\section{The edge multiscale method}
\label{sec:edge-multiscale}
We present in this section the edge multiscale method to solve \eqref{eq:main-prob} for $d=2$ only for simplicity. The generalization of this method to $d=3$ can be followed similarly. The main idea of this approach is that first, we generate a coarse mesh $\mathcal{T}_H$ on the spatial domain $D$, which cannot resolve the microscale feature in the coefficient $\kappa$ and then introduce a sequence of overlapping subdomains using $\mathcal{T}_H$. Note that standard numerical methods cannot produce a reasonable solution under this coarse mesh $\mathcal{T}_H$ due to the pre-asymptotic effect. Second, we derive a local splitting of the solution on each subdomain by a summation of a local bubble function and a local Harmonic extension function. In contrast, the former can be solved locally, and the latter can be defined over the internal edges of the coarse mesh $\mathcal{F}_H$. This local splitting induces naturally a global splitting using the partition of unity functions. Third, we propose to use hierarchical bases up to level $\ell$ as the ansatz space for the solution on $\mathcal{F}_H$. The most crucial component in the theoretical analysis is to transfer the approximation properties over the coarse skeleton $\mathcal{F}_H$ to the internal of each subdomain without extra regularity assumptions on the solution. This was first proved in 
 \cite[Appendix A]{li2019convergence} by defining a very weak solution in each subdomain and establishing an {\em a priori} estimate inspired by the transposition method in \cite{MR0350177}. 
%We recap the edge multiscale ansatz space proposed in \cite{fu2023wavelet}, which uses hierarchical bases to approximate the solution over the coarse skeleton $\FH$.
\subsection{Discretization}
First, we introduce a cheap discretization of the computational domain $D$ that fails to resolve the multiple scales in the heterogeneous coefficient $\k$. Let $\TH$ be a regular partition of the domain $D$ into quasi-uniform quadrilaterals with mesh size $H$. Let $\FH:=\cup_{T\in\TH}\partial T\backslash\partial D$ be the collection of internal edges in $\TH$. The set of nodes of $\TH$ are denoted by $\{\x_{i}\}_{i=1}^{N}$, with $N$ being the total number of coarse nodes. The coarse neighborhood associated with the node $\x_{i}$ is denoted by
\[
\oi:=\overline{\bigcup\left\{\Kj\in\TH:\x_{i}\in\overline{K}_{j}\right\}}.
\]

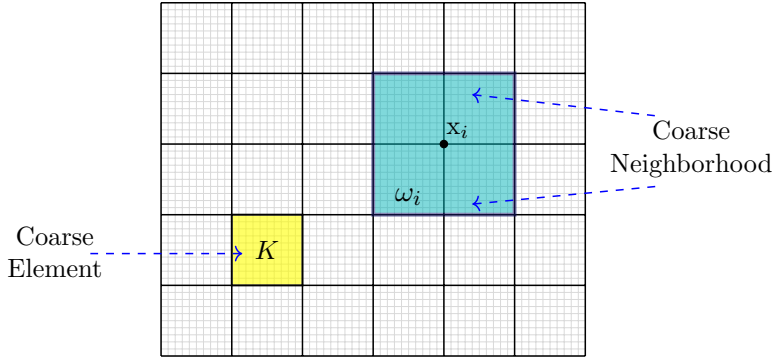
\begin{figure}[h!]
\centering
%\subfloat
%\centering
\begin{tikzpicture}[scale=4.7]
\draw[step=0.02cm,color=Gray, line width = 0.01mm,opacity=0.3] (0,0) grid (1.2,1.0);
\draw[step=0.2cm,color=Black,line width = 0.2mm] (0,0) grid (1.2,1.0);
%\draw [draw=MidnightBlue, line width=1.5pt, fill=Blue, opacity=0.50] (0.1,0.1) rectangle (0.6,0.6);
%\draw [draw=Sepia, line width=0.02mm, fill=Mulberry, opacity=0.50] (0.04,0.94) rectangle (0.06,0.96);
\draw [draw=MidnightBlue, line width=1.5pt, fill=BlueGreen, opacity=0.50] (0.6,0.4) rectangle (1,0.8);
%\node at (-0.09,0.35) {$K_{i,2}$};
\draw [draw=black, line width=0.3mm, fill=Yellow, opacity=0.6] (0.2,0.2) rectangle (0.4,0.4);
\node at (0.3,0.3) {\small{$K$}};
%\node at (-0.3,0.95) {\small{\mbox{fine element}}};
\filldraw (0.8,0.6) circle (0.3pt);
%\node at (0.04,1.05) {\large{$D$}};
\node at (0.84,0.64) {\small{$\x_{i}$}};
\node at (0.7,0.45) {$\oi$};
\node at (1.5,0.64) {\centering\small{\mbox{Coarse}}};
\node at (1.5,0.54) {\centering\small{\mbox{Neighborhood}}};
\node at (-0.3,0.34) {\centering\small{\mbox{Coarse}}};
\node at (-0.3,0.25) {\centering\small{\mbox{Element}}};
\coordinate (A1) at (0.88,0.74);
\coordinate (A2) at (1.4,0.68);
\coordinate (A3) at (0.88,0.43);
\coordinate (A4) at (1.4,0.48);
\coordinate (K1) at (0.23,0.29);
\coordinate (K2) at (-0.2,0.29);
\draw [<-,line width=0.2mm,dashed,Blue] (A1) -- (A2);
\draw [<-,line width=0.2mm,dashed,Blue] (A3) -- (A4);
\draw [<-,line width=0.2mm,dashed,Blue] (K1) -- (K2);
\end{tikzpicture}
\caption{2-d coarse grid $\mathcal{T}_H$, where $K$ and $\oi$ denote a coarse element and the coarse neighborhood associated with the node $\x_{i}$.}
\label{fig:grid}
\end{figure}
%Figure~\ref{fig:grid} illustrates neighborhoods and elements subordinated to the coarse discretization $\TH$. 
We defined the overlapping constant $\Cov$ by
\begin{equation}
\label{eq:overlapcond}
C_{\mathrm{ov}}:=\max_{K\in\TH}\#\{\x_{i}:K\in\oi,\mbox{for }i=1,\dots,N\}.
\end{equation} 
Next, we introduce the partition of unity $\{\chi_{i}\}_{i=1}^{N}$ subordinate to the sequence of overlapping subdomains $\{\oi\}_{i=1}^{N}$ that satisfies 
\begin{equation*}
%\left\{
\begin{aligned}
{\text{supp}(\chi_{i})}\subset\bar{\omega}_{i},\;
\sum\limits_{i=1}^{N}\chi_{i}&=1 \text{ in } D,\;
 \| \chi_{i}\|_{\Linf(\oi)}\leq C_{\infty},\quad
\|\nabla \chi_{i}\|_{\Linf(\oi)}\leq C_{\text{G}}H^{-1}
\end{aligned}
%\right.
\end{equation*}
with $C_{\infty}$ and $C_{\text{G}}$ being positive constants independent of $\kappa$, $h$ and $H$. %\citep{melenk1996partition}. 

Then, we define the weighted coefficient:
\begin{equation}
\label{eq:wk}
\wk=H^{2}\k\sum_{i=1}^{N}|\gd\chi_{i}|^{2},
\end{equation}
%Furthermore, let $\wko$ be defined by
%\begin{equation}
%\label{eq:wko}
%\wko(\x):=\begin{cases}
%\wko,&\quad\mbox{when }\wk(\x)\neq 0,\\
%1,&\quad\mbox{otherwise}.
%\end{cases}
%\end{equation}
and the weighted $\LtD$ space
\begin{align*}
\Lt_{\wk}(D)&:=\left\{w:\|w\|^{2}_{\Lt_{\wk}(D)}:=\int_{D}\wk w^{2}d\x<\infty\right\}.
%\Lt_{\kappa}(D)&:=\left\{w:\|w\|^{2}_{\Lt_{\kappa}(D)}:=\int_{D}\kappa w^{2}d\x<\infty\right\}.
\end{align*}

\subsection{Local-global splitting}
Note that the solution $u$ of problem \eqref{eq:main-prob} satisfies 
\[
\Li u:=\dive(\k\gd u)+\b\cdot\gd u = R(\cdot,u)-\pt u,\quad\mbox{in }\oi,
\]
which can be split into the summation of two parts,
\begin{equation}
\label{eq:split-u}
u|_{\oi} = \uiI + \uiII.
\end{equation}
Here, each component $\uiI$ and $\uiII$ are given by
\begin{equation}
\label{eq:uiI-prob}
\left\{
\begin{aligned}
\Li u^{i,\roma} &= R(\cdot,u)-\pt u&&\mbox{in }\oi,\\
u^{i,\roma} &= 0&&\mbox{on }\p\oi,
\end{aligned}
\right.
\end{equation}
and
\begin{equation}
\label{eq:uiII-prob}
\left\{
\begin{aligned}
\Li u^{i,\romb} &= 0&&\mbox{in }\oi,\\
u^{i,\romb} &= u&&\mbox{on }\p\oi\backslash\partial D\\
u^{i,\romb} &= 0&&\mbox{on }\p\oi\cap\partial D.
\end{aligned}
\right.
\end{equation}
Observe that $\uiI$ contains the local information that can be solved locally, and $\uiII$ encodes the global information over the inner coarse skeleton $\FH$. This local splitting induces a global splitting of $u$ by means of the partition of unity functions, 
\begin{equation}
\label{eq:decomp-u}
u = \left(\sum_{i=1}^{N}\chi_{i}\right)u=\sum_{i=1}^{N}\chi_{i}u|_{\oi}
=\sum_{i=1}^{N}\chi_{i}(\uiI+\uiII):=\uI + \uII,
\end{equation}
where
\begin{equation}
\label{eq:def-uII}
\uI =\sum_{i=1}^{N}\chi_{i}\uiI\text{ and } \uII = \sum_{i=1}^{N}\chi_{i}\uiII.
\end{equation}
\subsection{Hierarchical bases}
\label{sec:hierarchical}
Next, we introduce the hierarchical bases on the unit interval $I:=[0,1]$, which facilitates hierarchically splitting the space $\Lt(I)$.

Let the level parameter and the mesh size be $\ell$ and  $h_{\ell}=:2^{-\ell}$ with $\ell\in\N$. Then, the grid points on level $\ell$ are
\[
x_{\ell,j}=j\times h_{\ell},\quad 0\leq j \leq 2^{\ell}.
\]
We can define the basis functions on level $\ell$ by
\[
\psi_{\ell,j}(x):\begin{cases}
1-\left|\frac{x}{h_{\ell}-j}\right|,&\quad\mbox{if }x\in[(j-1)h_{\ell},(j+1)h_{\ell}]\cap[0,1],\\
0,&\quad\mbox{otherwise.}\end{cases}
\]
Define the set on each level $\ell$ by
\[
B_{\ell}:=\left.\left\{ j\in\N:\begin{cases}
j=1,\dots,2^{\ell}-1,j\mbox{ is odd},&\quad\mbox{if }\ell>0\\
j=0,1,&\quad\mbox{if }\ell=0\end{cases}\right\}\right\}.
\]
The subspace of level $\ell$ is
\[
\W_{\ell}:=\spn\{\psi_{\ell,j}:j\in B_{\ell}\}.
\]
We denote $V_{\ell}$ as the subspace in $\Lt(I)$ up to level $\ell$, which is defined by the direct sum of subspaces 
\[
\V_{\ell}:=\bigoplus_{m\leq\ell}\W_{m}.
\]
Consequently, this yields the hierarchical structure of the subspace $\V_{\ell}$, namely 
\[
\V_{0}\subset\V_{1}\subset\cdots\subset\V_{\ell}\subset\V_{\ell+1}\cdots.
\]
Furthermore, the following hierarchical decomposition of the space $\Lt(I)$ holds
\[
\Lt(I)=\lim_{\ell\to\infty}\bigoplus_{m\leq \ell}\W_{m}.
\]
Note that one can derive the hierarchical decomposition of the space $\Lt(I^{d-1})$ for $d>1$ through the tensor product, which is denoted as $\V_{\ell}^{\otimes^{d-1}}$. We will use the subspace $\V_{\ell}$ to approximate the exact solution $u$ restricted on $\FH$.

Next, we present the approximation properties of the hierarchical space $\V_{\ell}$, which can be derived from standard $L^2$-projection error \cite{MR520174}, combining with the interpolation method.
%This paper will only focus on the convergence analysis of a multiscale scheme based upon the Haar wavelets $\VeI$. The convergence analysis for space $\V^{II}_{\ell}$ can be derived similarly, see \cite{fu2019edge}.

\begin{prp}[Approximation properties of the hierarchical space $\V_{\ell}$]
\label{prop:haar}
Let $d=2,3$, $s\in (0,1]$ and let $\mathcal{I}_{\ell}: \Lt(I^{d-1})\to \V_{\ell}^{\otimes^{d-1}}$ be $\Lt$-projection for each level $\ell\geq 0$, then there holds,
\begin{align}\label{prop:approx-wavelets}
%P_{\ell+1} v&=P_{\ell}v+\sum\limits_{j=0}^{2^{\ell}-1}(v,\psi_{\ell+1,j})_{I}\psi_{\ell+1,j}&\text{for all }v\in L^2(I)\\
\|v-\mathcal{I}_{\ell}v\|_{\Lt(I^{d-1})}&\lesssim 2^{-s\ell}|v|_{\H^s(I^{d-1})}\quad\text{for all }v\in \H^s(I^{d-1}).
\end{align}
Here, $|\cdot|_{\H^s(I^{d-1})}$ denotes the Gagliardo seminorm in the fractional Sobolev space (or Slobodeskii space) $H^s(I^{d-1})$, given by
\begin{align*}
|v|^2_{\H^s(I^{d-1})}:=\int_{I^{d-1}}\int_{I^{d-1}}\frac{|v(x)-v(y)|^2}{|x-y|^{d-1+2s}}\mathrm{d}x\mathrm{d}y.
\end{align*}
The corresponding full norm is denoted as $\|\cdot\|_{\H^s(I^{d-1})}$.
\end{prp}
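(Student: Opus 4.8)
The plan is to obtain the estimate by first establishing the one-dimensional case ($d=2$, so $I^{d-1}=I$) and then lifting it to the tensor-product setting ($d=3$) by a standard stability-plus-tensorization argument, finally filling the gap between integer and fractional $s$ by interpolation. First I would recall the well-known $L^2$-projection estimate on $I$: for the piecewise-linear hierarchical space $\V_\ell$, which coincides with the standard continuous piecewise-linear finite element space on the uniform mesh of size $h_\ell = 2^{-\ell}$, the $L^2$-orthogonal projection $\mathcal I_\ell$ satisfies $\|v - \mathcal I_\ell v\|_{L^2(I)} \lesssim h_\ell^{t}\,|v|_{H^t(I)}$ for $t \in \{1,2\}$, by the Bramble–Hilbert lemma together with the $L^2$-stability $\|\mathcal I_\ell v\|_{L^2(I)} \le \|v\|_{L^2(I)}$ (this is the content of the cited reference \cite{MR520174}). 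For $t = 0$ the estimate is trivial: $\|v - \mathcal I_\ell v\|_{L^2(I)} \le \|v\|_{L^2(I)} = 2^{-0\cdot\ell}|v|_{H^0(I)}$. Then real (or complex) interpolation between the endpoint cases $t=0$ and $t=1$, and between $t=1$ and $t=2$, using that $[L^2, H^m]_{\theta,2} = H^{\theta m}$ for $0<\theta<1$ (with the interpolation norm equivalent to the Gagliardo–Slobodeckij norm), yields $\|v - \mathcal I_\ell v\|_{L^2(I)} \lesssim 2^{-s\ell}\|v\|_{H^s(I)}$ for every $s \ge 0$; one can further replace the full norm by the seminorm on the right, since $\mathcal I_\ell$ reproduces constants and a scaled Poincaré-type inequality on each mesh cell (or a Deny–Lions argument) absorbs the lower-order term. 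This gives exactly \eqref{prop:approx-wavelets} when $d=2$.

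For $d=3$ I would write $\mathcal I_\ell$ on $\V_\ell^{\otimes 2}$ as the tensor product $\mathcal I_\ell \otimes \mathcal I_\ell$ of the one-dimensional projectors acting on the two coordinate directions; since $\mathcal I_\ell$ is an $L^2(I)$-orthogonal projection, the tensor product is the $L^2(I^2)$-orthogonal projection onto $\V_\ell^{\otimes 2}$, so the notation is consistent. The error then splits via the telescoping identity
\begin{align*}
v - (\mathcal I_\ell\otimes\mathcal I_\ell)v = (I - \mathcal I_\ell^{(1)})v + \mathcal I_\ell^{(1)}(I - \mathcal I_\ell^{(2)})v,
\end{align*}
where superscripts indicate the acting variable. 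The first term is controlled by applying the one-dimensional estimate in variable $x_1$ and integrating in $x_2$; the second term uses $L^2$-stability of $\mathcal I_\ell^{(1)}$ together with the one-dimensional estimate in $x_2$. Each contributes a factor $2^{-s\ell}$ times an anisotropic Sobolev seminorm (mixed smoothness of order $s$ in one variable), which is dominated by $|v|_{H^s(I^2)}$; summing and using $h_\ell = 2^{-\ell}$ gives the claim for $d=3$. The fractional orders are handled, as in the $d=2$ case, by interpolating the tensor-product estimate between consecutive integer values of $s$.

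The main obstacle I expect is the bookkeeping of \emph{anisotropic} fractional Sobolev norms in the tensorization step: the intermediate quantities that naturally appear (e.g.\ $L^2$ in $x_2$ of the $H^s$-seminorm in $x_1$) are mixed-smoothness norms, and one must verify carefully that they are bounded by the isotropic Gagliardo seminorm $|v|_{H^s(I^2)}$ appearing in the statement — this requires either a known embedding between mixed and isotropic fractional Sobolev spaces on the cube, or doing the tensorization at the integer levels $s\in\{0,1,2\}$ (where the mixed norms are just combinations of weak derivatives and are transparently dominated by $|v|_{H^s}$) and only then interpolating. I would adopt the latter route to keep the argument clean, so the only genuinely nontrivial analytic input is the interpolation identity $[L^2(I^{d-1}),H^m(I^{d-1})]_{\theta,2}=H^{\theta m}(I^{d-1})$ with equivalence of norms, which is classical.
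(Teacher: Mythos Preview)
Your proposal is correct and follows exactly the route the paper indicates: the paper gives no detailed proof but merely states (in the sentence preceding the proposition) that the result follows from the standard $L^2$-projection error estimate combined with the interpolation method, which is precisely your strategy of establishing integer-order estimates and interpolating, with the tensor-product argument for $d=3$. Your write-up is in fact considerably more detailed than what the paper itself provides.
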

%\noteLeo{Here, I can try to get a proof for $s\geq 1$...}
%\begin{proof}
%See \cite[Proposition 4.1]{fu2024ems-hs}.
%\end{proof}
\subsection{Edge multiscale ansatz space}
Next, we introduce the construction of the edge multiscale ansatz space. 

First, we define the linear space over the inner boundary of each coarse neighborhood $\partial\omega_i\backslash\partial D$. Let the level parameter $\ell\in \mathbb{N}$ be fixed, and let $\Gamma_{i}^{j}$ with $j=1,\cdots,m$ be a partition of $\partial\omega_i\backslash\partial D$ with no mutual intersection, \ie, $\cup_{j=1}^{m}\overline{\Gamma_{i}^{j}}=\partial\omega_i\backslash\partial D$ and $\Gamma_{i}^{j}\cap \Gamma_{i}^{j'}=\emptyset$ if $j\neq j'$. Here, $m\in\mathbb{N}$ denotes the number of internal edges for $\omega_i$. Furthermore, we denote $\V_{i,\ell}^{j}\subset C(\partial\omega_{i}\backslash\partial D)$ as the linear space spanned by hierarchical bases up to level $\ell$ on each coarse edge $\Gamma_{i}^{j}$, which is continuous over $\partial \omega_i$ and vanishes on $\partial D$. The local edge space $\V_{i,\ell}$ defined over $\partial\omega_{i}$ is the smallest linear space having $\V_{i,\ell}^{j}$ as a subspace. Let $\{\psi_{\ell,i}^{j}\}_{j=1}^{n_i}$ and 
$\{\x_{\ell,i}^{j}\}_{j=1}^{n_i}$ be the nodal basis functions and the associated nodal points for $\V_{i,\ell}$. We can represent the local edge space $V_{i,\ell}$ by  
\begin{align}\label{eq:local-edge}
\V_{i,\ell} := \text{span} \left\{\psi_{\ell,i}^{j} : 1 \leq j \leq  n_i\right\}.
\end{align}
Next, we introduce the local multiscale space over each coarse neighborhood $\omega_i$, which is defined by 
\begin{align}\label{eq:local-multiscale}
\mathcal{L}^{-1}_{i}(\V_{i,\ell})
:= \text{span} \left\{\mathcal{L}^{-1}_{i}(\psi_{\ell,i}^{j}) : 1 \leq j \leq  n_i\right\}.
\end{align}
Here, $\mathcal{L}^{-1}_i (\psi_{\ell,i}^{j}):=v\in \H^{1}(\omega_i)$ is the solution to the following local problem, 
\begin{equation}
\label{eq:Li}
  \left\{ \begin{aligned}
          \mathcal{L}_{i} v&:=-\gd\cdot(\k\gd v)+\b\cdot \nabla v=0&& \mbox{in }\omega_{i},\\
          v&=\psi_{\ell,i}^{j}&& \mbox{on }\partial\omega_i\backslash\partial D\\
          v&=0&& \mbox{on }\partial\omega_i\cap\partial D.
  \end{aligned}\right.
\end{equation}
By this construction, the dimension of $\mathcal{L}^{-1}_{i}(\V_{i,\ell})$ equals $n_i$. In practice, a number of $n_i$ local problems \eqref{eq:Li} are solved by the standard numerical methods such as the FEMs with sufficient accuracy to obtain the local multiscale space $\mathcal{L}^{-1}_{i}(\V_{i,\ell})$, which can be solved in parallel and thus has low computational complexity. 

Finally, the edge multiscale ansatz space is defined by the Partition of Unity $\{\chi_{i}\}_{i=1}^N$,
\begin{align}\label{eq:global-multiscale}
\Vmslew := \text{span} \left\{\chi_{i}\mathcal{L}^{-1}_{i}(\psi_{\ell,i}^{j}) : \,  \, 1 \leq i \leq N \text{ and } 1 \leq j \leq  n_i\right\}\subset\V.
\end{align}

We summarize the construction of edge multiscale ansatz space in Algorithm~\ref{algorithm:wavelet}.
\begin{algorithm}[H]
\caption{Edge Multiscale ansatz space}
\label{algorithm:wavelet}
    \KwData{The level parameter $\ell\in \mathbb{N}$; coarse neighborhood $\omega_{i}$ and its four coarse edges $\Gamma_{i}^{j}$ with
    $j=1,\cdots, m$;
    the subspace $\V_{i,\ell}^{j}\subset \Lt(\Gamma_{i}^{j})$ up to level $\ell$ on each coarse edge $\Gamma_{i}^{j}$.
    }
    \KwResult{$\Vmslew$}
Construct the local edge space $V_{i,\ell}$ \eqref{eq:local-edge}\;    
Calculate the local multiscale space $\mathcal{L}^{-1}_{i} (\V_{i,\ell})$ \eqref{eq:local-multiscale}\;
Construct the global multiscale space $\Vmslew$ \eqref{eq:global-multiscale}.
 %Solve for the local bubble function $u^{i,\roma}$ from \eqref{eq:u-roma1conv} for $i=1,\cdots,N$\;
%Calculate the global bubble function $u^{\roma}$ from \eqref{eq:bubble-glo}\;
%Solve for $u_{\text{ms},\ell}^{\romb}\in V_{\text{ms},\ell}$ from \noteLi{\eqref{eqn:weakform_h-romb}}\;
%Obtain $u_{\text{ms},\ell}$ from \eqref{eq:ms-prob}.
\end{algorithm}
Once the local multiscale basis functions are identified, we solve for $\umsl(\cdot,t)\in \Vmslew$, such that
\begin{equation}\label{eq:ms-prob}
\begin{aligned}
(\pt\umsl,v)+A(\umsl,v)&= (R(\cdot,\umsl),v),\feac v\in\Vmslew,\\
\ums(\cdot,0)&=\mathcal{I}_{\ell}u_0.
\end{aligned}
\end{equation}
Here, $\mathcal{I}_{\ell}: \LtD\to \Vmslew$ denotes the $\Lt$-projection.

\begin{comment}
\begin{rem}
Let $n$ be the number of fine elements in each coarse element, respectively. For the sake of simplicity, we can take $n:=2^{m}$ for some positive constant $m\in\N$. However, this is not mandatory since we can always use the interpolation operator to connect the fine grids $\Th$ with the mesh grids $h_{\ell}$ for Haar wavelets or hierarchical bases of level $\ell$.
\end{rem}

\begin{rem}[Flexibility of the Wavelet-based edge multiscale basis functions]
\pretolerance=2000
The Wavelet-based edge multiscale basis functions can be extended to more general PDEs with heterogeneous coefficients since the only modification is to replace the local operator $ \Li $ in Algorithm~\ref{alg:wavelet} with the localized PDEs.
\end{rem}
\end{comment}

To obtain the convergence rates of \eqref{eq:ms-prob}, we introduce the global projection operator $\Pl$ of level $\ell:\V\to\Vmslew$. Since the edge multiscale ansatz space $\Vmslew$ is generated by the local multiscale space $\Liinv(\V_{i,\ell})$ using the partition of unity \eqref{eq:global-multiscale}, we only need to define the local interpolation operator $\Pil:C(\p\oi)\to\Linv(\V_{i,\ell})$. Let $\mathcal{I}_{i,\ell}:\Lt(\partial\oi)\to 
\V_{i,\ell}$ be the $\Lt$-projection, then we define $\Pil:\Lt(\p\oi)\to\Linv(\V_{i,\ell})$ by
\[
\Pil v:=\Liinv(\mathcal{I}_{i,\ell}v|_{\partial\oi}).
\]
Observe that any $v\in\V$ can be expressed by
\[
v = \sum_{i=1}^{N}\chi_{i}v|_{\oi}.
\]
Then the global interpolation $\Pl$ of level $\ell$ is defined by means of the local projection, 
\begin{equation}
\Pl v:=\sum_{i=1}^{N}\chi_{i}(\Pil v),\quad\feac v\in\V.
\end{equation}

\subsection{Error estimates}
\label{sec:loc-glo-split}
In this section, we are concerned with deriving the convergence rates of \eqref{eq:ms-prob} following the framework given in \cite{li2019convergence,fu2023wavelet}. 
The following result is a slight modification of Lemma 5.1 from \cite{fu2023wavelet}, which establishes \emph{a priori} estimates for the global bubble function $\uI$.

%\QLeo{I need to check and fix the two following Lemmas...}
\begin{lem}
\label{lem:uiI}
Let $\uI$ be defined in \eqref{eq:def-uII}. Then we have
\begin{subequations}
\begin{align}\label{eq:bubble-est}
\|\uI\|_{\LtD}+H\|\gd\uI\|_{\LtkD}\leq H^{2}\left(\|u\|_{\HoD}
+\|\pt u\|_{\Lt(D)}\right).
\end{align}
\end{subequations}
\end{lem}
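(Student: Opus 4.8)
The plan is to bound each local bubble function $\uiI$ solving \eqref{eq:uiI-prob} first, and then assemble the global estimate for $\uI=\sum_i\chi_i\uiI$ via the partition-of-unity machinery. For the local step, I would test the weak form of \eqref{eq:uiI-prob} against $\uiI$ itself; since $\uiI$ vanishes on all of $\partial\oi$, the $\b$-advection term drops out (using $\gd\cdot\b=0$ and integration by parts, $(\b\cdot\gd\uiI,\uiI)_{\oi}=0$), leaving $\|\gd\uiI\|_{\LtkD}^2\preceq\|R(\cdot,u)-\pt u\|_{\H^{-1}(\oi)}\|\uiI\|_{\Ho(\oi)}$. By Assumption~\ref{asp:01} and Lemma~\ref{lem:local-R} (inequality \eqref{eq1:lem2} with $w=0$, using $R(\cdot,0)=0$) one has $\|R(\cdot,u)\|_{\H^{-1}(\oi)}\preceq\|u\|_{\Lt(\oi)}\preceq\|u\|_{\HoD}$; combined with a Friedrichs/Poincar\'e inequality on $\oi$ (which carries a factor $H$ because $\diam(\oi)\simeq H$) this yields the local bound $\|\gd\uiI\|_{\Ltk(\oi)}\preceq H\big(\|u\|_{\HoD}+\|\pt u\|_{\Lt(D)}\big)$ and $\|\uiI\|_{\Lt(\oi)}\preceq H^2\big(\|u\|_{\HoD}+\|\pt u\|_{\Lt(D)}\big)$, where I absorb the $\eta_{\min}\ge 1$ weights into the implied constant.

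Next I would glue the local estimates. Writing $\uI=\sum_{i=1}^N\chi_i\uiI$ and using $\|\chi_i\|_{\Linf}\le C_\infty$, the finite-overlap property \eqref{eq:overlapcond}, and the product rule $\gd(\chi_i\uiI)=\uiI\gd\chi_i+\chi_i\gd\uiI$, a standard computation (the local-global approximation estimate recalled in the commented-out Theorem, i.e.\ the partition-of-unity lemma of \cite{melenk1996partition}) gives
\begin{align*}
\|\uI\|_{\LtD}&\preceq\Big(\sum_{i=1}^N\|\uiI\|_{\Lt(\oi)}^2\Big)^{1/2},\\
\|\gd\uI\|_{\LtkD}&\preceq\Big(\sum_{i=1}^N H^{-2}\|\uiI\|_{\Lt_{\wk}(\oi)}^2+\|\chi_i\gd\uiI\|_{\Ltk(\oi)}^2\Big)^{1/2}.
\end{align*}
The weighted $\Lt_{\wk}$-norm is controlled via \eqref{eq:wk}: $\|\uiI\|_{\Lt_{\wk}(\oi)}^2=H^2\int_{\oi}\k\,|\gd\chi_i|^2\,(\uiI)^2\preceq C_{\mathrm G}^2\|\uiI\|_{\Lt(\oi)}^2$ using $\|\gd\chi_i\|_{\Linf}\le C_{\mathrm G}H^{-1}$ and $\k=1$ near the overlaps (or, more carefully, bounding by the local $H$-weighted estimate). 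Substituting the local bounds and using $\sum_i 1\le \Cov\,(\#\text{elements})$ with the fact that each $\|u\|_{\HoD}$, $\|\pt u\|_{\Lt(D)}$ is a global quantity, the $N$ summands collapse (the $\sqrt N$ against the fact that $\sum_i\|u\|_{\Lt(\oi)}^2\preceq \Cov\|u\|_{\LtD}^2$), producing $\|\uI\|_{\LtD}\preceq H^2(\|u\|_{\HoD}+\|\pt u\|_{\Lt(D)})$ and $H\|\gd\uI\|_{\LtkD}\preceq H^2(\|u\|_{\HoD}+\|\pt u\|_{\Lt(D)})$, which is exactly \eqref{eq:bubble-est}.

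The main obstacle is the correct bookkeeping of the heterogeneity weight $\k$ and the mesh size $H$ through both the local Friedrichs inequality and the partition-of-unity reassembly: one must verify that the weighted coefficient $\wk$ defined in \eqref{eq:wk} is precisely what makes the $H^{-1}\|\cdot\|_{\Lt_{\wk}}$ term scale like $\|\gd\chi_i\|_{\Linf}\|\cdot\|_{\Lt}$, and that the $\eta_{\min}\ge1$ assumption lets the $\k$-weights be swallowed into $\preceq$ without generating spurious powers of $\eta_{\max}/\eta_{\min}$ (here the hypothesis that $\eta_{\min}$ and $\eta_{\max}$ have the same magnitude is used). A secondary technical point is justifying that $R(\cdot,u)-\pt u$ lies in $\H^{-1}(\oi)$ with the stated bound uniformly in $i$ — this is where Lemma~\ref{lem:local-R} and the regularity of $u$ enter — and ensuring the advection term genuinely vanishes on each $\oi$ despite $\oi$ touching $\partial D$ for boundary neighborhoods (there $\uiI$ still vanishes on all of $\partial\oi$ by \eqref{eq:uiI-prob}, so the argument is unaffected). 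Everything else is the routine Cauchy–Schwarz and finite-overlap summation already used in \cite{li2019convergence,fu2023wavelet}.
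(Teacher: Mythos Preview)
Your plan matches the paper's proof: test \eqref{eq:uiI-prob} with $\uiI$, drop the advection term via $\nabla\cdot\b=0$ and $\uiI|_{\p\oi}=0$, bound the right-hand side using Lemma~\ref{lem:local-R} together with Friedrichs on $\oi$, then sum via the partition of unity and the overlap condition~\eqref{eq:overlapcond}. The paper's own argument is terser in the assembly step but structurally identical.

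There is one slip in your local estimate. Invoking~\eqref{eq1:lem2} gives $\|R(\cdot,u)\|_{\H^{-1}(\oi)}\preceq\|u\|_{\Lt(\oi)}$ with no factor of $H$, and Friedrichs cannot then supply it since $u\notin\Ho_0(\oi)$. The paper instead uses the $\Lt$--$\Lt$ pairing $\int_{\oi}(R(\cdot,u)-\pt u)\,\uiI\,\dx\leq\|R(\cdot,u)-\pt u\|_{\Lt(\oi)}\|\uiI\|_{\Lt(\oi)}$, gains the factor $H$ from Friedrichs applied to $\uiI\in\Ho_0(\oi)$, and then bounds $\|R(\cdot,u)\|_{\Lt(\oi)}\preceq\|u\|_{\Ho(\oi)}$ via~\eqref{eq2:lem2} (not~\eqref{eq1:lem2}). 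You should also keep the \emph{local} norms $\|u\|_{\Ho(\oi)}$ and $\|\pt u\|_{\Lt(\oi)}$ on the right of the local bound so that the overlap summation collapses correctly, exactly as you yourself note at the end.
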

\begin{proof}
First, we multiply \eqref{eq:uiI-prob} by $\uiI$ and integrate over $\oi$ to obtain
\[
\int_{\oi}\k|\gd\uiI|^{2}\dx+\int_{\oi}\left(\b\cdot\gd\uiI\right)\uiI \dx=\int_{\oi}(R(\x,u)-\pt u)\uiI \dx,
\]
using the fact $\uiI|_{\p\oi}=0$, we reduce the expression above to
\[
\int_{\oi}\k|\gd\uiI|^{2}\dx=\int_{\oi}(R(\x,u)-\pt u)\uiI \dx.
\]
Combining the Schwarz inequality and the Friedrich's inequality, we arrive at
\[
\|\nabla\uiI\|_{\Hok(\oi)}^{2}\leq \frac{\sqrt{2}H}{\pi}\|R(\cdot,u)-\pt u\|_{\Lt(\oi)}\|\nabla\uiI\|_{\Hok(\oi)}.
\]
Since $R(\x,0)=0$, together with Assumption~\ref{asp:01} and Lemma \ref{lem:local-R}, we obtain the following
\[
\|\nabla\uiI\|_{\Hok(\oi)}\leq CH\left(\|u\|_{\Ho(\oi)}+\|\pt u\|_{\Lt(\oi)}\right).
\]
For the $\Lt$ estimate for the local function, we have again used the Friedrichs-Poincar\'e inequality.
\[
\|\uiI\|_{\Lt(\oi)}\preceq H\|\gd \uiI\|_{\Ltk(\oi)}\preceq H^{2}\left(\|u\|_{\Ho(\oi)}+\|\pt u\|_{\Lt(\oi)}\right).
\]
%Next, we obtain the desired assertion by invoking Friedrich's inequality and the overlapping condition \eqref{eq:overlapcond}.
Finally, combining equation \eqref{eq:def-uII} and the overlap condition \eqref{eq:overlapcond} gives the desired result.
\end{proof}

Now, we introduce the approximation properties of the global interpolation operator $\Pl$.

\begin{lem}
\label{lem:Pl-interp}
Let $d=2,3$, and $s\in (0,1]$. Assume that the level parameter $\ell$ is non-negative. Let $u\in\V$ solve the problem \eqref{eq:main-prob} and define the global Harmonic extension $\uII$ in \eqref{eq:def-uII}. Then, there holds
\begin{subequations}
\begin{align}
\|\uII-\Pl\uII\|_{\Lt_{\widetilde{\kappa}}(D)}&\preceq \eta(H, \ell,s)H\|u\|_{\H^{s+1/2}(D)},\label{eq1:leminterpuII}\\
\|\gd(\uII-\Pl\uII)\|_{\LtkD}&\preceq \eta(H, \ell,s)\|u\|_{\H^{s+1/2}(D)}.\label{eq2:leminterpuII}
\end{align}
\end{subequations}
Here, 
\begin{align}\label{eq:etaHl}
\eta(H, \ell,s):= \|\kappa\|_{\Linf(\FH)}^{1/2} 2^{-s\ell}H^{s-1/2}.
\end{align}
\end{lem}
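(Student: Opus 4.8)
The plan is to reduce the global estimate to a sum of local estimates via the partition of unity, and then to prove each local estimate by a transposition (duality) argument inside each coarse neighborhood $\oi$, exactly in the spirit of \cite[Appendix A]{li2019convergence} and \cite{fu2023wavelet}. First I would invoke the local--global principle: since $\uII-\Pl\uII=\sum_{i=1}^N\chi_i(\uiII-\Pil\uiII)$, the partition-of-unity bounds on $\chi_i$ and $\gd\chi_i$ together with the overlapping constant $\Cov$ give
\begin{align*}
\|\uII-\Pl\uII\|_{\Lt_{\wk}(D)}^2&\preceq \Cov\,C_\infty^2\sum_{i=1}^N\|\uiII-\Pil\uiII\|_{\Lt_{\wk}(\oi)}^2,\\
\|\gd(\uII-\Pl\uII)\|_{\LtkD}^2&\preceq \Cov\sum_{i=1}^N\Big(C_{\mathrm{G}}^2H^{-2}\|\uiII-\Pil\uiII\|_{\Lt_{\wk}(\oi)}^2+\|\chi_i\gd(\uiII-\Pil\uiII)\|_{\Ltk(\oi)}^2\Big),
\end{align*}
so that, recalling $\wk=H^2\k\sum_i|\gd\chi_i|^2$ already carries the $H^2$ weight, everything is controlled once we bound $H^{-1}\|\uiII-\Pil\uiII\|_{\Lt_{\wk}(\oi)}$ and $\|\gd(\uiII-\Pil\uiII)\|_{\Ltk(\oi)}$ locally.

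The heart of the argument is the local error bound for $e_i:=\uiII-\Pil\uiII=\Liinv(u|_{\p\oi}) - \Liinv(\mathcal{I}_{i,\ell}(u|_{\p\oi}))=\Liinv(g_i)$ where $g_i:=(u-\mathcal{I}_{i,\ell}u)|_{\p\oi}$ is the $\Lt(\p\oi)$-projection error of the boundary trace, which is $\Li$-harmonic in $\oi$ with $e_i=0$ on $\p\oi\cap\p D$. I would define a very weak (transposition) solution: for the $\Lt_{\wk}(\oi)$ estimate, test against the solution $w$ of the adjoint problem $\Li^* w = \wk e_i$ in $\oi$, $w=0$ on $\p\oi$; integrating by parts twice (using $\Li e_i=0$ in the interior) moves all derivatives onto $w$ and leaves a boundary term $\int_{\p\oi}g_i\,\k\,\p_n w\,\ds$, giving $\|e_i\|_{\Lt_{\wk}(\oi)}^2\preceq \|g_i\|_{\Lt(\p\oi)}\,\|\k\p_n w\|_{\Lt(\p\oi)}$. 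The $C^{1,\alpha}$-regularity of $\p\oi$ (inherited from the structure $D$, $\Gamma_i$) and the elliptic regularity estimate $\|\k\p_n w\|_{\Lt(\p\oi)}\preceq H^{-1/2}\|\k\|_{\Linf(\FH)}^{1/2}\|\wk e_i\|_{\Lt_{\wko}(\oi)}\preceq H^{-1/2}\|\k\|_{\Linf(\FH)}^{1/2}\|e_i\|_{\Lt_{\wk}(\oi)}$ then yield, after dividing, $\|e_i\|_{\Lt_{\wk}(\oi)}\preceq H^{-1/2}\|\k\|_{\Linf(\FH)}^{1/2}\|g_i\|_{\Lt(\p\oi)}$; similarly the energy estimate follows from $\|\gd e_i\|_{\Ltk(\oi)}^2 = \int_{\p\oi} g_i\,\k\,\p_n e_i\,\ds \preceq \|g_i\|_{\Lt(\p\oi)}\,H^{-1}\|\k\|_{\Linf}^{1/2}\|\gd e_i\|_{\Ltk(\oi)}$ using a trace/Caccioppoli-type bound on $\p_n e_i$ over $\p\oi$. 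Finally, Proposition~\ref{prop:haar}, rescaled from the reference interval/square of side $H$ to $\p\oi$, gives $\|g_i\|_{\Lt(\p\oi)}\preceq 2^{-s\ell}H^{s+(d-1)/2-(d-1)/2}\cdots$; carefully tracking the scaling one obtains $H^{-1/2}\|\k\|_{\Linf(\FH)}^{1/2}\|g_i\|_{\Lt(\p\oi)}\preceq \eta(H,\ell,s)\,\|u\|_{\H^{s+1/2}(\p\oi)}$, and summing over $i$ with a trace inequality $\sum_i\|u\|_{\H^{s+1/2}(\p\oi)}^2\preceq \|u\|_{\H^{s+1}(D)}^2\preceq \|u\|_{\H^{s+1/2}(D)}^2$ (absorbing the half-derivative loss into the ambient regularity as stated) closes both estimates.

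I expect the main obstacle to be making the transposition/duality estimate fully rigorous with the \emph{weighted} coefficient $\wk$ in the norms: one must verify that the adjoint problem with right-hand side $\wk e_i$ has enough regularity for $\k\p_n w$ to be controlled in $\Lt(\p\oi)$, which is where the $C^{1,\alpha}$ boundary and the assumption that $\eta_{\min}$ and $\eta_{\max}$ have comparable magnitude enter — they keep $\wk/\k$ and $\k$ uniformly bounded on the relevant sets so the weights do not spoil the elliptic regularity constants. A secondary technical point is getting the powers of $H$ in $\eta(H,\ell,s)$ exactly right under the rescaling from the unit square to $\oi$ (side $\simeq H$), since the Gagliardo seminorm of order $s$ scales like $H^{(d-1)/2-s}$ on an edge while $\Lt(\p\oi)$ scales like $H^{(d-1)/2}$; combined with the $H^{-1/2}$ from the trace of the normal derivative and the definition $\wk\sim H^2\k|\gd\chi|^2\sim \k$, this must reproduce $\|\k\|_{\Linf(\FH)}^{1/2}2^{-s\ell}H^{s+d/2-1}$. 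The rest — the partition-of-unity splitting, the $\Cov$ bookkeeping, and the passage from $\|u\|_{\H^{s+1/2}(\p\oi)}$ to $\|u\|_{\H^{s+1/2}(D)}$ — is routine given the results already established in the excerpt.
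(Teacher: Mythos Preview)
Your overall architecture---partition-of-unity reduction, transposition/duality for the weighted $\Lt$ bound in each $\oi$, and Proposition~\ref{prop:haar} rescaled to $\partial\oi$---matches the paper's proof. The paper also quotes \cite[Theorem~A.1]{li2019convergence} for the step $\|e_i\|_{\Lt_{\wk}(\oi)}\preceq H^{d/2}\|\kappa\|_{\Linf(\partial\oi)}^{1/2}\|g_i\|_{\Lt(\partial\oi)}$, so the first estimate is essentially as you describe (note the $H^{d/2}$, not $H^{-1/2}$).

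The genuine gap is in your route to the energy bound. You write $\|\gd e_i\|_{\Ltk(\oi)}^2=\int_{\partial\oi}g_i\,\kappa\,\partial_n e_i\,\ds$ and then appeal to a ``trace/Caccioppoli-type bound on $\partial_n e_i$''. This step is not justified: $e_i$ is the $\Li$-harmonic extension of $g_i$, and $g_i$ is only controlled in $\Lt(\partial\oi)$, so there is no a~priori reason for $\kappa\partial_n e_i$ to lie in $\Lt(\partial\oi)$ with a bound of the form $H^{-1}\|\kappa\|_{\Linf}^{1/2}\|\gd e_i\|_{\Ltk(\oi)}$ (that would require $H^{3/2}$-type regularity of $e_i$, which the heterogeneous coefficient does not give). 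Your identity also drops the advection contribution $\int_{\oi}(\b\cdot\gd e_i)e_i$, which does not vanish since $e_i\neq 0$ on $\partial\oi$. The paper avoids all of this by a Caccioppoli inequality (\cite[Lemma~4.4]{li2019convergence}): because $e_i$ is $\Li$-harmonic, one gets directly
\[
\|\chi_i\gd e_i\|_{\Ltk(\oi)}\preceq H^{-1}\|e_i\|_{\Lt_{\wk}(\oi)},
\]
which feeds the already-established weighted $\Lt$ bound into the energy term of the partition-of-unity splitting. This is the missing ingredient; note that only $\|\chi_i\gd e_i\|_{\Ltk(\oi)}$ is needed in your second displayed inequality, not the full $\|\gd e_i\|_{\Ltk(\oi)}$.

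A smaller correction: the boundary regularity you should land on after Proposition~\ref{prop:haar} is $|u|_{\H^{s}(\partial\oi)}$, not $\|u\|_{\H^{s+1/2}(\partial\oi)}$. The trace inequality then gives $|u|_{\H^{s}(\partial\oi)}\preceq\|u\|_{\H^{s+1/2}(\oi)}$ and the sum over $i$ yields $\|u\|_{\H^{s+1/2}(D)}$ with no half-derivative to ``absorb''.
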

\begin{proof}
First, the property of the partition of unity  of $\{\chi_{i}\}_{i=1}^{N}$ along with expression \eqref{eq:def-uII} yields
\[
\uII-\Pl\uII=\sum_{i=1}^{N}\chi_{i}(\uiII-\Pil\uiII).
\]
Using the rescaling argument and Proposition \ref{prop:haar}, we obtain
\[
\|\uiII-\Pil\uiII\|_{\Lt(\p\oi)}\leq C2^{-s\ell}H^{s}|\uiII|_{\H^{s}(\p\oi)}.
\]
By definition, $\uiII=u$ on $\partial\omega_i$, together with the trace inequality, this implies
 \begin{align}\label{eq:edge-app}
\|\uiII-\Pil\uiII\|_{\Lt(\p\oi)}\leq C2^{-s\ell}H^{s}\|u\|_{\H^{s+1/2}(\oi)}.
\end{align}
Following the rescaling argument from \cite[Theorem A.1, pp.~615]{li2019convergence}, we can prove
\begin{align}%\label{eq:loc-very-weak}
\|\uiII-\Pil\uiII\|_{\Lt_{\widetilde{\kappa}}(\oi)}
&\leq C H^{1/2}\|\uiII-\Pil\uiII\|_{\Lt_{\kappa}(\partial\oi)}\nonumber\\
&\leq C H^{1/2}\|\kappa\|_{\Linf(\partial\oi)}^{1/2}\|\uiII-\Pil\uiII\|_{\Lt(\partial\oi)}\nonumber.
\end{align} 
This, together with \eqref{eq:edge-app}, leads to 
\begin{align}\label{eq:loc-l2}
\|\uiII-\Pil\uiII\|_{\Lt_{\widetilde{\kappa}}(\oi)}
&\leq C 2^{-s\ell}H^{s+1/2}\|\kappa\|_{\Linf(\partial\oi)}^{1/2}\|u\|_{\H^{s+1/2}(\oi)}.
\end{align} 
By a standard Caccioppoli-type inequality, see, e.g., \citet[Lemma 4.4]{li2019convergence}, we derive 
\begin{align}\label{eq:loc-eng}
\|\chi_i\nabla(\uiII-\Pil\uiII)\|_{\Ltk(\oi)}
&\leq C 2^{-s\ell}H^{s-1/2}\|\kappa\|_{\Linf(\partial\oi)}^{1/2}\|u\|_{\H^{s+1/2}(\oi)}.
\end{align} 

Invoking the overlapping condition \eqref{eq:overlapcond} and \eqref{eq:loc-l2}, leads to
\begin{align*}
\|\uII-\Pl\uII\|_{\Lt_{\widetilde{\kappa}}(D)}&\leq \sqrt{\Cov}C_{\infty} \left(
\sum_{i=1}^{N}\|\uiII-\Pil\uiII\|^{2}_{\Lt_{\widetilde{\kappa}}(\oi)}\right)^{1/2}\\
%&\leq C\sqrt{\Cov}C_{\infty}2^{-\ell/2}H\left(\sum_{i=1}^{N}\|\uiII\|^{2}_{\Hok(\oi)}\right)^{1/2}\\
%& \preceq 2^{-\ell/2}H\left((1+CH)\left(\sum_{i=1}^{N}\|u\|_{\Hok(\oi)}^{2}\right)^{1/2}\right.\\
%&\quad+\left. H\left(\sum_{i=1}^{N}\|\pt u\|_{\Lt(\oi)}^{2}\right)^{1/2}\right)\\
&\preceq 2^{-s\ell}H^{s-1/2}\|\kappa\|_{\Linf(\mathcal{F}_H)}^{1/2}\|u\|_{\H^{s+1/2}(D)}.
\end{align*}
This proves \eqref{eq1:leminterpuII}. 

Next, applying the overlapping condition \eqref{eq:overlapcond} again, we obtain
\begin{equation}
\label{eq1:main-GuiII}
\begin{split}
\|\nabla(\uiII-\Pil\uiII)\|_{\Ltk(D)}&\leq \sqrt{2\Cov}\left(\sum_{i=1}^{N}C_{\mathrm{G}}^{2}H^{-2}\|\uiII-\Pil\uiII\|^{2}_{\Lt_{\widetilde{\kappa}}(\oi)}\right.\\
&\quad+\|\chi_{i}\gd (\uiII-\Pil\uiII)\|^{2}_{\Ltk(\oi)}\Bigg)^{1/2}.
\end{split}
\end{equation}
Combining with \eqref{eq:loc-l2} and \eqref{eq:loc-eng} reveals the second assertion. 
This completes the proof.
\end{proof}
\begin{comment}
Let $\mathcal{L}:=-\nabla\cdot(\kappa\nabla \cdot)+\b(\x)\cdot\gd\cdot $ be the elliptic operator defined on $V$, and let its discrete operator $\mathcal{L}_{\ell}: V_{\text{ms},\ell}\to H^{-1}(D)$ be
\begin{align*}
(\mathcal{L}_{\ell}w_{\ell},v_{\ell} ):=(\mathcal{L}w_{\ell},v_{\ell} )
=A(w_{\ell},v_{\ell} )\text{ for all }v_{\ell}\text{ and }w_{\ell} \in V_{\text{ms},\ell}.
\end{align*}
Then the inverse operator $\mathcal{L}_{\ell}^{-1}$ exists, which is self-adjoint, positive semi-definite on $L^2(D)$, and positive definite on $V_{\text{ms},\ell}$. Then the following relation holds
\begin{align*}
\mathcal{L}_{\ell}^{-1}=\mathcal{R}_{\ell}\mathcal{L}^{-1}
\end{align*}
with $\mathcal{R}_{\ell}$ being the Riesz operator \eqref{eq:riesz}.
\end{comment}
%Note that $\mathcal{L}_{\ell}^{-1}$ is the numerical solver in Algorithm \ref{algorithm:wavelet}.
%and $\mathcal{P}_{\ell}$ be the projection onto the multiscale space $V_{\text{ms},\ell}$ given by
%	\begin{align}\label{eq:projectionglo}
%	\mathcal{P}_{\ell}(v):=\sum_{i=1}^{N}\chi_i\mathcal{P}_{i,\ell}(v) \text{ for all }v\in H^1_{\kappa}(D).
%	\end{align}
Let $\mathcal{R}_{\ell}$ be the elliptic projection operator in the multiscale space $\Vmslew$, \ie,
\begin{align}\label{eq:riesz}
\feac v\in \V \text{ and }\wmsl\in \Vmslew:A(v-\mathcal{R}_{\ell}v,\wmsl)=0.
\end{align}
\begin{lem}\label{lem:riesz}
Let $\ell$ be a non-negative integer and $s\in(0,1]$. Let $u$ be the solution to problem \eqref{eq:main-prob}. Then there holds
\begin{equation}
\begin{split}
\|\nabla({u-\mathcal{R}_{\ell}u})\|_{\LtkD}+\|{u-\mathcal{R}_{\ell}u}\|_{\LtD}&\lesssim \eta(H,\ell,s)
\|u\|_{\H^{s+1/2}(D)}\\
&\quad+H\left(\|u\|_{\HoD}+\|\pt u\|_{\Lt(D)}\right).\label{eq:glo-energy2}
\end{split}
\end{equation}

\end{lem}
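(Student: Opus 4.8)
The plan is to decompose the elliptic projection error via the local–global splitting $u = \uI + \uII$ introduced in \eqref{eq:decomp-u}, and then control each piece separately. First I would write $u - \mathcal{R}_\ell u = (u - \Pl u) + (\Pl u - \mathcal{R}_\ell u)$, where $\Pl$ is the global interpolation operator. Since $\mathcal{R}_\ell u$ is the energy-orthogonal projection onto $\Vmslew$ and $\Pl u \in \Vmslew$, Céa's lemma (using the coercivity \eqref{eq:coercive} and the boundedness of $A(\cdot,\cdot)$ just after it) gives $\|\nabla(u - \mathcal{R}_\ell u)\|_{\LtkD} \preceq \|\nabla(u - \Pl u)\|_{\LtkD}$, so it suffices to bound the interpolation error $u - \Pl u$ in both norms.

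Next I would split the interpolation error along $u = \uI + \uII$. For the global Harmonic part, Lemma \ref{lem:Pl-interp} directly supplies
\[
\|\uII - \Pl \uII\|_{\Lt_{\widetilde\kappa}(D)} \preceq \eta(H,\ell,s) H \|u\|_{\H^{s+1/2}(D)}, \qquad
\|\nabla(\uII - \Pl \uII)\|_{\LtkD} \preceq \eta(H,\ell,s)\|u\|_{\H^{s+1/2}(D)}.
\]
For the global bubble part $\uI$, one observes that $\Pl$ acts on $\uI$ through the local $\Lt(\partial\oi)$-projections of its boundary values; since $\uiI = 0$ on $\partial\oi$ by \eqref{eq:uiI-prob}, one has $\Pil \uiI = 0$, hence $\Pl \uI = 0$. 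Therefore the bubble contribution to the interpolation error is just $\uI$ itself, and Lemma \ref{lem:uiI} gives
\[
\|\uI\|_{\LtD} + H\|\nabla\uI\|_{\LtkD} \preceq H^2\left(\|u\|_{\HoD} + \|\pt u\|_{\Lt(D)}\right).
\]
Combining these with the triangle inequality (and noting $\|\cdot\|_{\LtD} \preceq \|\cdot\|_{\Lt_{\widetilde\kappa}(D)}$ up to the mild assumptions on $\widetilde\kappa$, or more carefully tracking the weighted norms) yields the energy-norm bound.

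For the $\Lt$-norm bound on $u - \mathcal{R}_\ell u$, I would use a standard Aubin–Nitsche duality argument: introduce the solution $w \in \V$ of the adjoint problem $\mathcal{L}^* w = u - \mathcal{R}_\ell u$, test with $u - \mathcal{R}_\ell u$, use Galerkin orthogonality \eqref{eq:riesz} to subtract $\Pl w$, apply boundedness of $A$, and invoke the already-established energy estimates for both $u - \mathcal{R}_\ell u$ and $w - \Pl w$ (the latter via the same split applied to $w$, using elliptic regularity $\|w\|_{\dot{\H}^2(D)} \preceq \|u - \mathcal{R}_\ell u\|_{\LtD}$). This produces an extra factor of $\eta(H,\ell,s)$ or $H$, which is absorbed into the stated right-hand side. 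The main obstacle I anticipate is bookkeeping the heterogeneity-dependent weights: the energy estimates are naturally in the $\kappa$- and $\widetilde\kappa$-weighted norms, while the duality argument requires care that the adjoint regularity constant and the weight $\|\kappa\|_{\Linf(\FH)}^{1/2}$ hidden in $\eta(H,\ell,s)$ are handled consistently, and that the $\preceq$ notation legitimately hides only $\kappa$-independent constants — this is where one must lean on the assumption that $\eta_{\min}$ and $\eta_{\max}$ have comparable magnitude.
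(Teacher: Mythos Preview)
Your energy-norm argument is essentially the paper's: both use Galerkin orthogonality/coercivity to reduce to an interpolation error, split via $u=\uI+\uII$, and invoke Lemmas~\ref{lem:uiI} and~\ref{lem:Pl-interp}. The paper phrases it slightly differently---it tests $A(u-\mathcal{R}_\ell u,\cdot)$ directly against $\uI+(\uII-\Pl\uII)$ rather than asserting $\Pl\uI=0$---which sidesteps a minor definitional subtlety (strictly, $\Pl$ acts on the trace of the \emph{global} $\uI$ on $\partial\omega_i$, not on $\uiI$), but the net effect is identical.

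Where you diverge is the $\Lt$ bound. You propose Aubin--Nitsche duality, and you correctly flag the obstacle: the adjoint elliptic regularity constant would depend on $\kappa$, which threatens the $\kappa$-independence hidden in $\preceq$. The paper avoids this entirely. Since the stated estimate has the \emph{same} right-hand side for the $\Lt$ norm as for the energy norm, the paper simply applies Friedrichs' inequality together with $\kappa\geq 1$:
\[
\|u-\mathcal{R}_\ell u\|_{\LtD}\preceq \|\nabla(u-\mathcal{R}_\ell u)\|_{\LtD}\leq \|\nabla(u-\mathcal{R}_\ell u)\|_{\LtkD},
\]
and then quotes the energy bound just established. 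Your duality route, if it could be made to work cleanly, would buy an extra power of $H$ or $\eta(H,\ell,s)$ in the $\Lt$ estimate---but that gain is not claimed in the lemma, so the machinery is unnecessary and the heterogeneity concern you raise is entirely avoidable.
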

\begin{proof}
We obtain from the global decomposition \eqref{eq:decomp-u}, \eqref{eq:coercive} and \eqref{eq:riesz},
\[
\|\nabla({u-\mathcal{R}_{\ell}u})\|_{\LtkD}^2= A(u-\mathcal{R}_{\ell}u,u-\mathcal{R}_{\ell}u)\leq A(u-\mathcal{R}_{\ell}u,u^{\roma}+e^{\romb}).
\]
Here, $e^{\romb}:=u^{\romb}-\mathcal{P}_{\ell}u^{\romb}$. 
The definition of the bilinear form $A(\cdot,\cdot)$, together with the Cauchy-Schwarz inequality, further leads to
\begin{align*}
\|\nabla(u-\mathcal{R}_{\ell}u)\|_{\LtkD}^2&=\int_D\kappa\nabla(u-\mathcal{R}_{\ell}u)\cdot\nabla(u^{\roma}+e^{\romb})\dx
+\int_{D}\b\cdot \nabla(u-\mathcal{R}_{\ell}u)(u^{\roma}+e^{\romb})\dx\\
&\leq\|\nabla({u-\mathcal{R}_{\ell}u})\|_{\Ltk(D)}\|\nabla(u^{\roma}+e^{\romb})\|_{\Ltk(D)}\\
&\quad+\|\b\|_{\LinfD}\normL{\nabla(u-\mathcal{R}_{\ell}u)}{\LtD}\normL{u^{\roma}+e^{\romb}}{\LtD}.
\end{align*}
Since $\kappa\geq 1$, we derive
\begin{align*}
\|\nabla({u-\mathcal{R}_{\ell}u})\|_{\Ltk(D)}\leq \|\nabla(u^{\roma}+e^{\romb})\|_{\Ltk(D)}+
\normL{u^{\roma}+e^{\romb}}{\LtD}.
\end{align*}
Finally, an application of Lemma \ref{lem:uiI}, Lemma \ref{lem:Pl-interp}, and Friedrich's inequality yields the desired assertion.
\end{proof}
Since $u_0\in\dot{\H}^2(D)$, similar argument as the proof to Lemma \ref{lem:riesz} leads to
\begin{equation}\label{eq:glo-energy-int}
\|\nabla({u_0-\mathcal{R}_{\ell}u_0})\|_{\Ltk(D)}+\normL{u_0-\mathcal{R}_{\ell}u_0}{\LtD}\lesssim \eta(H,\ell,s)
\|u_0\|_{\H^{s+1/2}(D)}+H\|\mathcal{L}u_0\|_{\LtD}.%\label{eq:glo-L2rietz}
\end{equation}
Finally, We present an estimate for the semi-discrete formulation given in \eqref{eq:ms-prob}, which follows from \citealp[Theorem 3.1]{Larsson92} or \cite[Theorem 14.2, pp.~246]{thomee2006galerkin}, together with Lemma \ref{lem:riesz} and \eqref{eq:glo-energy-int}.
\begin{thm}
\label{thm:theta-estimate}
Let $\ell$ be a non-negative integer, $s\in (0,1]$ and $u_0\in \dot{\H}^{2}(D)$. Let $u$ and $\umsl$ be the solutions to Problem \eqref{eq:main-prob} and Problem \eqref{eq:ms-prob}, respectively. There holds
\begin{align*}
\|\nabla({u(t)-\umsl(t)})\|_{\Ltk(D)}
&+\|u(t)-\umsl(t)\|_{\LtD}\\
&\preceq \eta(H,\ell,s)
\left(\|u(t)\|_{H^{s+1/2}(D)}+\|u_0\|_{\H^{s+1/2}(D)}\right)\\
&\quad+H\left(\|u(t)\|_{\HoD}+\|\pt u(t)\|_{\Lt(D)}+\|\mathcal{L}u_0\|_{\LtD}\right),
\end{align*}
for all $t\in(0,T]$.
\end{thm}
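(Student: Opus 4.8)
The plan is to follow the classical Galerkin error-splitting strategy for parabolic problems (as in Thomée's book, Theorem 14.2), combining the elliptic projection estimate from Lemma~\ref{lem:riesz} with a stability argument for the difference between the multiscale solution and the elliptic projection of the exact solution. Concretely, I would write $u(t)-\umsl(t) = \rho(t) + \theta(t)$ where $\rho(t):=u(t)-\mathcal{R}_{\ell}u(t)$ and $\theta(t):=\mathcal{R}_{\ell}u(t)-\umsl(t)\in\Vmslew$. The term $\rho(t)$ is controlled directly: Lemma~\ref{lem:riesz} gives the bound $\|\nabla\rho(t)\|_{\Ltk(D)}+\|\rho(t)\|_{\LtD}\preceq \eta(H,\ell,s)\|u(t)\|_{\H^{s+1/2}(D)}+H(\|u(t)\|_{\HoD}+\|\pt u(t)\|_{\LtD})$, which already accounts for most of the right-hand side of the theorem.

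The main work is to estimate $\theta(t)$. First I would derive the error equation: subtracting \eqref{eq:ms-prob} from \eqref{eq:weak-prob} tested against $v\in\Vmslew$, and using the defining property \eqref{eq:riesz} of $\mathcal{R}_{\ell}$ (so that $A(\theta,v)=A(\mathcal{R}_{\ell}u-u,v)+A(u-\umsl,v)=A(u-\umsl,v)$), one gets
\begin{equation*}
(\pt\theta,v)+A(\theta,v)=(R(\cdot,u)-R(\cdot,\umsl),v)-(\pt\rho,v),\quad\text{for all }v\in\Vmslew.
\end{equation*}
Taking $v=\theta$, using coercivity \eqref{eq:coercive}, the Lipschitz bound \eqref{eq2:lem2} (or \eqref{eq1:lem2}) from Lemma~\ref{lem:local-R} on $R(\cdot,u)-R(\cdot,\umsl)$, and Cauchy--Schwarz on $(\pt\rho,\theta)$, one obtains a differential inequality of Gronwall type for $\|\theta(t)\|_{\LtD}$. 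Integrating and using $\theta(0)=\mathcal{R}_{\ell}u_0-\mathcal{I}_{\ell}u_0$, whose $\Lt$-norm is bounded via \eqref{eq:glo-energy-int} and the $\Lt$-stability of $\mathcal{I}_\ell$, together with the bound $\int_0^t\|\pt\rho(\tau)\|_{\LtD}\,\mathrm{d}\tau\preceq \int_0^t(\eta(H,\ell,s)\|\pt u\|_{\H^{s+1/2}}+H\|\ldots\|)\,\mathrm{d}\tau$ (differentiating Lemma~\ref{lem:riesz} in time, or re-running its proof with $\pt u$ in place of $u$), yields the $\Lt$-estimate for $\theta$. For the energy-norm estimate of $\theta$, I would instead test with $v=\pt\theta$ (or use the standard trick of testing with $\theta$ and $\pt\theta$ and combining), absorbing the $\|\pt\theta\|_{\LtD}$ terms, to get control of $\|\nabla\theta(t)\|_{\Ltk(D)}$; here Lemma~\ref{asp:02} supplies the needed regularity $\|\pt u(t)\|_{\LtD}\preceq 1$ and $\|\mathcal{L}^{1/2}\pt u\|_{\LtD}\preceq t^{-1/2}$, the latter being integrable on $(0,t]$.

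Finally, combining the triangle inequality $\|u-\umsl\|\leq\|\rho\|+\|\theta\|$ in both norms, and simplifying the constants (using $\eta_{\min}\simeq\eta_{\max}$, $\kappa\geq 1$, and the overlapping condition $\Cov$ absorbed into $\preceq$), delivers the stated bound. The step I expect to be the main obstacle is the handling of the semilinear term together with the possible blow-up of $\pt\rho$ and of $\mathcal{L}^{1/2}\pt u$ near $t=0$: one must be careful that all time integrals of the regularity bounds from Lemma~\ref{asp:02} converge (the $t^{-1/2}$ singularity is integrable, so this works), and that the Gronwall constant arising from the Lipschitz constant of $R$ does not destroy the claimed dependence — but since we only claim $\preceq$ with a constant independent of $\kappa$ and $H$, the exponential-in-$T$ factor from Gronwall is harmless. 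This is exactly the situation covered by \citet[Theorem 14.2]{thomee2006galerkin}, so the cleanest exposition is to verify that the hypotheses of that theorem hold with the present elliptic projection and then quote it.
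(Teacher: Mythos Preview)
Your proposal is correct and takes essentially the same approach as the paper: the paper's proof consists of a single sentence invoking \cite[Theorem~14.2]{thomee2006galerkin} together with Lemma~\ref{lem:riesz} and \eqref{eq:glo-energy-int}, which is precisely the elliptic-projection splitting $u-\umsl=\rho+\theta$ you outline. Your sketch simply unpacks what Thomée's argument does, and your closing remark that one should verify the hypotheses and then quote that theorem is exactly what the paper does.
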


\section{Full discretization}
\label{sec:full-discretization}

In this section, we introduce the fully discrete scheme of \eqref{eq:weak-prob}  based on exponential integration techniques \cite{hochbruck1998exponential,hochbruck2005explicit,hochbruck2010exponential}. %These techniques are notable for their ability to use large-time steps with reasonable accuracy, enabling handling complexity in the heterogeneous media problem \citep{contreras2023exponential,poveda2024second}. 
The effectiveness of these approaches is attributed to the computation of the matrix-valued exponential integrator. Their prominence has increased in the past decade due to advancements in numerical linear algebra and efficient algorithms for computing these functions \citep{caliari2024accelerating,caliari2024efficient}. Consequently, these methods have become significantly more competitive, offering an alternative to traditional temporal discretizations.% (\noteLi{Please add main literature, motivation and key idea for this discretization})

Let $0=t^{0}<t^{1}<\cdots<t^{N_{t}-1}<t^{N_{t}}=T$ be a uniform partition of the interval $[0,T]$, with the time-step size given by $\dt=\max_{n=1,\dots,N_{t}}\{t^{n}-t^{n-1}\}$, where $N_{t}$ is a positive integer. We recall exponential integration techniques relevant to the proposed multiscale finite element method. 

First, we define the discrete solution $\umsl(t)$ evolving in time. Since that $A(\cdot,\cdot)$ is bounded and coercive over $\Vmslew\times\Vmslew$, then Riesz's representation Theorem implies the existence of a bounded linear operator $\Lms:\Vmslew\to\Vmslew'$, satisfying
\begin{align}\label{eq:lms}
A(\umsl,\vmsl)=\langle\Lms\umsl,\vmsl\rangle\quad\feac\vmsl\in\Vmslew.
\end{align}
Here, $\langle \cdot,\cdot\rangle$ denote the duality paring.
Then we can rewrite \eqref{eq:weak-prob} in a semidiscrete formulation in $\Vmslew$ 
\begin{equation}
\label{eq:Lh-fem}
\begin{aligned}
\pt\umsl+\Lms\umsl &=\Pms R(\cdot,\umsl)&\quad t\in(0,T],\\
\umsl(0) &= \Pms u_{0}&
\end{aligned}
\end{equation}
where $\Pms: \LtD\to \Vmslew$ denotes $\Lt$-projection.

Following \cite{henry1981geometric}, the $\V$-elliptic property of $\Lms$ implies that $-\Lms$ is a sectorial on $\LtD$ (uniformly in $h$), \ie, there exist $C>0$ and $\theta \in(\tfrac{1}{2}\pi,\pi)$ such that
\[
\|(\lambda \Im +\Lms)^{-1}\|_{L(\LtD,\LtD)}\leq \frac{C}{|\lambda|},\quad \lambda\in S_{\theta},
\]
where $S_{\theta}:=\{\lambda\in\Cx:\lambda=\rho e^{i\phi},\rho>0,0\leq\phi\leq \theta\}$. The discrete operator $-\Lms$ is the infinitesimal of the exponential operator (or semigroup) $e^{-t\Lms}$ on $\Vmslew$ such that
\[
e^{-t\Lms}:=\frac{1}{2\pi}\int_{\Gamma}e^{t\lambda}(\lambda\Im+\Lms)^{-1}d\lambda,\quad t>0,
\]
where $\Gamma$ represents a path surrounding the spectrum of $-\Lms$. According to Duhamel's principle, we ensure the existence and uniqueness of the solution, which is given by the integral form
\begin{equation}
\label{eq:int-form}
\umsl(t^{n+1}) = e^{-\dt\Lms}\umsl(t^{n})+\int_{0}^{\dt}e^{-(\dt-s)\Lms}\Pms R\left(\cdot,\umsl(t^{n}+s)\right)ds.
\end{equation}
Approximating $\umsl(t^n+s)$ by $\umsl(t^n)$ for $s\in [0,\dt]$ \citep{hochbruck2010exponential}, we can obtain a fully discrete multiscale method for Problem \eqref{eq:ms-prob} as follows: for $n=0,\dots, N_{t}-1$,
\begin{equation*}
%\label{eq:RK1-1}
\unpmsl = e^{-\dt\Lms}\unmsl+\Lms^{-1}(I-e^{-\dt\Lms})\Pms R(\cdot,\unmsl).
\end{equation*}
Together with the fact $e^{-\dt\Lms}=-\dt\Lms\phi_{1}(-\dt\Lms)+\Im$ with $\phi_1(z)=\frac{e^z-I}{z}$, 
%\[
%\phi_{1}(-\dt\Lms):=(-\dt\Lms)^{-1}\left(e^{-\dt\Lms}-\Im\right)=\frac{1}{\dt}\int_{0}^{\dt}e^{-(\dt-\tau)\Lms}d\tau,
%\]
this implies
\begin{equation}
\label{eq:RK1-1}
\unpmsl = \unmsl+\dt\phi_{1}(-\dt\Lms)\left(\Pms R(\cdot,\unmsl)-\Lms\unmsl\right),
\end{equation}
which is the so-called exponential Euler scheme. Note that this scheme involves huge computational complexity when applied to \eqref{eq:main-prob} directly using standard numerical methods such as the finite element method, due to the existence of heterogeneous coefficient $\kappa$ and the evaluation of the exponential-like matrix function $\phi_1(\cdot)$. In sharp contrast, our proposed edge multiscale method \eqref{eq:ms-prob} offers a significant advantage. It reduces the size of the exponential-like matrix function $\phi_1(\cdot)$, thereby making exponential integration feasible even for such challenging multiscale problems.% (\noteLi{Please add a few sentences on the computational complexities for this scheme and how it compares with the explicit/implicit Euler scheme.})

The following result will be mainly used in this work.
\begin{prp}[Properties of the semigroup \citealp{henry1981geometric}]
\label{prp:prop-semigroup}
Let $\alpha\geq 0$ and any given parameter $0\leq \gamma\leq 1$, then there exists a constant $C>0$ such that
\begin{align*}
\|\Lms^{\alpha}e^{-t\Lms}\|_{\LtD}\leq Ct^{-\alpha},\quad\mbox{for }t>0,\\
\|\Lms^{-\gamma}(I-e^{-t\Lms})\|_{\LtD}\leq Ct^{\gamma},\quad\mbox{for }t\geq0.
\end{align*}
\end{prp}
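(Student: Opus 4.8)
The plan is to obtain both inequalities from the two facts already at hand: the \emph{uniform} sectoriality of $-\Lms$ on $\LtD$ recorded above, and the coercivity \eqref{eq:coercive} together with Friedrichs' inequality, which guarantee $\sigma(\Lms)\subset\{\lambda\in\Cx:\operatorname{Re}\lambda\geq c_{0}\}$ for some $c_{0}>0$ independent of $H$, $\ell$ and $h$. Consequently $\Lms^{-1}$ is bounded and the fractional powers $\Lms^{\alpha}$ are well defined on the finite-dimensional space $\Vmslew$; the real content of the proposition is that none of the constants degenerate as the discretization is refined.

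For the first inequality I would start from the Dunford--Taylor (holomorphic functional calculus) representation
\[
\Lms^{\alpha}e^{-t\Lms}=\frac{1}{2\pi i}\int_{\Gamma}\lambda^{\alpha}e^{-t\lambda}(\lambda\Im-\Lms)^{-1}\,d\lambda ,
\]
with $\Gamma$ the positively oriented boundary of a sector $\{|\arg\lambda|\leq\phi_{0}\}$, $\phi_{0}\in(0,\tfrac{\pi}{2})$, truncated by a small circular arc near the origin so that $\Gamma$ lies strictly to the right of $\sigma(\Lms)$ (equivalently, the contour of the paper's formula for $e^{-t\Lms}$ after the substitution $\lambda\mapsto-\lambda$). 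On $\Gamma$ the uniform sectoriality yields $\|(\lambda\Im-\Lms)^{-1}\|\leq C/|\lambda|$ on the two rays and $\|(\lambda\Im-\Lms)^{-1}\|\leq C$ on the arc, while on the rays $\operatorname{Re}\lambda=|\lambda|\cos\phi_{0}>0$, so $|e^{-t\lambda}|=e^{-t|\lambda|\cos\phi_{0}}$. Bounding the integral and rescaling $\lambda\mapsto\lambda/t$ on the rays gives $\int_{0}^{\infty}r^{\alpha-1}e^{-tr\cos\phi_{0}}\,dr\simeq t^{-\alpha}$ for $\alpha>0$, and the arc contributes a quantity bounded by a constant, hence $\preceq t^{-\alpha}$ for $t\leq 1$ and decaying exponentially for $t\geq 1$. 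The borderline exponent $\alpha=0$ is not covered by this computation but is immediate, since $\operatorname{Re}\sigma(\Lms)\geq c_{0}$ makes $\{e^{-t\Lms}\}_{t\geq 0}$ a uniformly bounded (indeed exponentially decaying) analytic semigroup.

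For the second inequality I would use $\tfrac{d}{ds}e^{-s\Lms}=-\Lms e^{-s\Lms}$ to write $\Im-e^{-t\Lms}=\int_{0}^{t}\Lms e^{-s\Lms}\,ds$, whence
\[
\Lms^{-\gamma}(\Im-e^{-t\Lms})=\int_{0}^{t}\Lms^{1-\gamma}e^{-s\Lms}\,ds .
\]
For $0<\gamma\leq 1$ the exponent $1-\gamma$ lies in $[0,1)$, so the first inequality gives $\|\Lms^{1-\gamma}e^{-s\Lms}\|\leq C s^{\gamma-1}$, which is integrable on $(0,t)$ and produces $\|\Lms^{-\gamma}(\Im-e^{-t\Lms})\|\leq \tfrac{C}{\gamma}t^{\gamma}$; the case $\gamma=0$ follows at once from $\|\Im-e^{-t\Lms}\|\leq 1+\|e^{-t\Lms}\|\leq C$.

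I expect the only genuine difficulty to be bookkeeping rather than ideas: every constant above (the sectoriality constants, $c_{0}$, $\phi_{0}$, the Gamma-function factor) must be traced back to the ellipticity constant in \eqref{eq:coercive} and the Friedrichs constant of $D$, both independent of $H$, $\ell$ and $h$, so that the resulting $C$ in Proposition~\ref{prp:prop-semigroup} is genuinely uniform in the discretization. The contour estimates, the rescaling, and the integral computations are otherwise entirely standard and can be quoted from the analytic-semigroup literature (see, e.g.\ \citealp{henry1981geometric}).
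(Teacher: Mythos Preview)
The paper does not give its own proof of this proposition; it is simply quoted from \citet{henry1981geometric} and used as a black box in the analysis of the fully discrete scheme. Your argument is exactly the standard one found in that reference: the Dunford--Taylor contour representation combined with the resolvent bound for the first estimate, and the identity $I-e^{-t\Lms}=\int_{0}^{t}\Lms e^{-s\Lms}\,ds$ reduced to the first estimate for the second. Your additional remark that the sectoriality data (the aperture $\phi_{0}$, the resolvent constant, and the spectral lower bound $c_{0}$) are controlled solely by the coercivity constant in \eqref{eq:coercive}, $\|\b\|_{\Linf(D)}$, and the Friedrichs constant of $D$---hence uniformly in $H$, $\ell$, $h$---is the only point not made explicit in the paper's statement, and it is correct: since $\nabla\cdot\b=0$, the numerical range of $\Lms$ lies in a fixed sector $\{|\arg\lambda|\leq\arctan C\}$ with $C$ depending only on those quantities, so a contour with half-angle $\phi_{0}\in(\arctan C,\tfrac{\pi}{2})$ works uniformly.
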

\subsection{Error estimates}

This subsection concerns the convergence study of fully discrete scheme \eqref{eq:RK1-1}. 

\begin{comment}
First, we shall give a slight modification in the proof to the following inequality established, for instance, in \cite{henry1981geometric,tambue2016exponential},
\begin{lem}
\label{lem:prop-Lms}
Consider $v\in\LtD$, $\Pms$ the orthogonal projection defined from $\LtD$ to $\Vmslew$. Since the $\V$-ellipticity of the bilinear form $A$ implies that $-\Lms$ is sectorial on $\LtD$ (uniformly in $H$), holds
\begin{equation}
\label{eq:prop-Lms}
\|\Lms^{-1/2}\Pms v\|_{\LtD}\preceq \|v\|_{\H^{-1}(D)}, \feac v\in\LtD.
\end{equation}
\end{lem}
\begin{proof}
%Notice that, by the orthogonality of the projection $\Pms$ in the space $\Vmslew$. Let $w\in\V$ be the solution of the steady problem
%\[
%-\dive(\k(\x)\gd w)+\b(\x)\cdot\gd w = R(w),\quad \mbox{in }D,
%\] 
%associated with zero Dirichlet boundary condition. Further $w$ satisfies Lemma~\ref{lem:Pl-interp}. Then, it holds 
%\begin{equation}
%\label{eq1:ort-pms}
%|(v-\Pms v,w)|\preceq 2^{-\ell/2}(1+CH)\|v\|_{\LtD}\|w\|_{\HokD}.
%\end{equation}
Identifying the space $\LtD$ to its dual, we obtain 
\begin{equation*}
%\label{eq2:ort-pms}
\|\Lms^{-1/2}\Pms v\|_{\LtD}=\sup_{w\in\Vmslew}\frac{|(\Lms^{-1/2}\Pms v,w)|}{\|w\|_{L^2(D)}}=\sup_{w\in\Vmslew}\frac{|(\Pms v,\Lms^{*-1/2}w)|}{\|w\|_{L^2(D)}}.
\end{equation*}
Here, $\Lms^{*-1/2}$ denotes the adjoint of $\Lms^{-1/2}$, and there holds
\begin{align*}
\|w\|_{H^1(D)}\preceq \|\Lms^{*1/2} w\|_{L^2(D)}. 
\end{align*}
Consequently, a straightforward calculation leads to 
\begin{align*}
%\label{eq2:ort-pms}
\|\Lms^{-1/2}\Pms v\|_{\LtD}
&=\sup_{w\in\Vmslew}\frac{|( v,w)|}{\|\Lms^{*1/2} w\|_{L^2(D)}}\\
&\preceq \sup_{w\in\Vmslew}\frac{|( v,w)|}{\|w\|_{H^1(D)}}\\
&\preceq \sup_{w\in V}\frac{|( v,w)|}{\|w\|_{H^1(D)}}\\
&=\|v\|_{H^{-1}(D)}.
\end{align*}
\end{proof}
\end{comment}
%Now, we are ready to present our final result.
\begin{thm}
\label{thm:main-result}
Let $\ell$ be a non-negative integer and $u_0\in \dot{\H}^{2}(D)$. Let $u\in \V$ and $\unmsl$ be the solution to Problems \eqref{eq:main-prob} and \eqref{eq:RK1-1}, respectively. When the stepping size $\Delta t$ is sufficiently small, there holds
\begin{align*}
%\label{eq:main-thm2}
\|u(t^{n})-\unmsl\|_{\LtD}
 &\preceq \eta(H,\ell,1)
+H+\Delta t.\\
\|\nabla({u(t^{n})-\unmsl})\|_{\Ltk(D)}
 &\preceq \eta(H,\ell,1)
+H+\Delta t \left(t^{n}\right)^{-1/2}.
\end{align*}
Here, the hidden constant may depend on $R,\b$, and $T$.
\end{thm}
\begin{proof}
The proof to the first estimate is similar to \cite[proof to Theorem 4.1]{tambue2016exponential}. To obtain the second estimate, 
an application of the triangle inequality yields
\begin{align*}
\|\nabla({u(t^{n})-\unmsl})\|_{\Ltk(D)}&\leq \|\gd(u(t^{n})-\umsl(t^{n}))\|_{\Ltk(D)}+\|\gd(\umsl(t^{n})-\unmsl)\|_{\Ltk(D)}\\
&:= I_{1} + I_{2}.    
\end{align*}
From Theorem \ref{thm:theta-estimate} and Lemma \ref{lem:02}, we get an upper bound for the first term $I_{1}$. Thus, our focus is on the second term $I_{2}$. Note that the integral form of the semidiscrete solution $\umsl$ \eqref{eq:int-form} and its approximation $\unmsl$ \eqref{eq:RK1-1} indicate,
\begin{align*}
\umsl(t^{n})&= e^{-\dt\Lms}\umsl(0)+\sum_{k=0}^{n-1}
\int_{t^{k}}^{t^{k+1}}e^{-(t^{n}-s)\Lms}\Pms R(\cdot,\umsl(s))\mathrm{d}s,\\
\unmsl&= e^{-\dt\Lms}\umsl(0)+\sum_{k=0}^{n-1}\int_{t^{k}}^{t^{k+1}}e^{-(t^{n}-s)\Lms}\Pms R(\cdot,\ukmsl)\mathrm{d}s.
\end{align*}
Hence, subtracting the second equation from the first one, we obtain the temporal discretization error, 
\[
\varepsilon_{\ms,\ell}^{n}:=\umsl(t^{n})-\unmsl= \sum_{k=0}^{n-1}\int_{t^{k}}^{t^{k+1}}e^{-(t^{n}-s)\Lms}\Pms \left(R(\cdot,\umsl(s))-R(\cdot,\ukmsl)\right)\mathrm{d}s.
\]
Then, together with the triangle inequality and the equivalence of norms by \eqref{eq:coercive} and \eqref{eq:lms}, %see, e.g. \cite[Theorem 1.4.6]{henry1981geometric}, 
\begin{align*}
\|\gd\vmsl\|_{\Ltk(D)}= \|\Lms^{1/2}\vmsl\|_{\LtD},
\quad\feac \vmsl\in \Vmslew,
\end{align*}
%A combination of \eqref{eq:coercive} and \eqref{eq:lms} implies  
%\begin{align*}
%\|\vms\|_{\HokD}^2&=\langle\Lms\vms,\vms  \rangle\qquad\forall \vms\in \Vmslew.
%\end{align*}
this leads to 
\begin{align*}
\|\gd\varepsilon_{\ms,\ell}^{n}\|_{\LtkD}
&\leq \sum_{k=0}^{n-1}\int_{t^{k}}^{t^{k+1}}\left\|\nabla e^{-(t^{n}-s)\Lms}\Pms \left(R(\cdot,\umsl(s))-R(\cdot,\ukmsl)\right)\right\|_{\Ltk(D)}\mathrm{d}s\\
&=\sum_{k=0}^{n-1}\int_{t^{k}}^{t^{k+1}}\left\|e^{-(t^{n}-s)\Lms}\Lms^{1/2}\Pms \left(R(\cdot,\umsl(s))-R(\cdot,\ukmsl)\right)\right\|_{\LtD}\mathrm{d}s.
\end{align*}
Since $\Pms$ is the $L^2$-projection, we get therefore, 
\begin{align*}
\|\gd\varepsilon_{\ms,\ell}^{n}\|_{\LtkD}\leq 
\sum_{k=0}^{n-1}\int_{t^{k}}^{t^{k+1}}\left\|e^{-(t^{n}-s)\Lms}\Lms^{1/2}\right\|_{\LtD} \left\|R(\cdot,\umsl(s))-R(\cdot,\ukmsl)\right\|_{\LtD}\mathrm{d}s.
\end{align*}
The remaining proof is similar to \cite[proof to Theorem 4.1]{tambue2016exponential} using Proposition \ref{prp:prop-semigroup} and Lemma \ref{lem:02}.
\end{proof}
\section{Numerical tests}
\label{sec:numerical-exp}
In this section, we present several numerical tests to demonstrate the accuracy of Scheme \eqref{eq:RK1-1}. %	We will define the spatial variable $\x=(x,y)$ for $d=2$ or $\x=(x,y,z)$ for $d=3$ in the computational domain $\overline{D}=\prod_{i=1}^{d}[a_{i},b_{i}]$ with $d\in\{2,3\}$. The reaction term $R$ will be just explicitly independent of $t$. 
%{\color{red}, for which system? you mentioned reaction-advection-diffusion systems below, what is the difference}
To obtain a reference solution with sufficient accuracy, we set the fine-scale mesh size of $h=2^{-9}$ for $d=2$ and $h=2^{-6}$ for $d=3$. Moreover, we take $h=2^{-8}$ in Example \ref{example1s} of 2-d two-component coupled systems. The backward Euler method is utilized for temporal discretization for the reference solution. A Homogeneous Dirichlet boundary condition is imposed for all tests.
%, and the term $R$ is assumed to be explicitly independent of $t$. 
%with $2^{10}$, $2^{8}$, and $2^{13}$ time steps in the 2D and 3D and systems examples respectively. 

The functions $\chi_{i}$ are the standard multiscale basis functions on each coarse element $K\in\TH$ defined by
\begin{equation}
\label{eq:pu-problem}
\begin{cases}
-\gd\cdot(\k\gd \chi_{i})=0,&\quad\mbox{in }K,\\
\chi_{i}=g_{i},&\quad\mbox{on }\p K,
\end{cases}
\end{equation}
where $g_{i}$ is affine over $\p K$ with $g_{i}(\x_{j})=\delta_{ij}$ for all $i,j=1,\dots,N$. Recall that $\{\x_{j}\}_{j=1}^{N}$ are the set of coarse nodes on $\TH$. 

To measure the approximation accuracy, we consider the following notation for the relative errors in $\Lt$- and $\H_{\kappa}^1$-norm and the numerical convergence rate $\text{CR}$:
\[
\varepsilon_{0}=\frac{\|\uh^{N}-\umsl^{N}\|_{\LtD}}{\|\uh^{N}\|_{\LtD}},\quad \varepsilon_{1}=\frac{\|\nabla(\uh^{N}-\umsl^{N})\|_{\LtkD}}{\|\nabla\uh^{N}\|_{\LtkD}},\quad \text{CR} = \frac{|\ln\varepsilon_{\star}^{H}-\ln\varepsilon_{\star}^{H/2}|}{\ln 2},
%\varepsilon_{1}=\frac{\|\uh^{N}-\ums^{N}\|_{\HoD}}{\|\uh^{N}\|_{\HoD}},
\]
where $\uh^{N}$ and $\ums^{N}$ denotes the reference and multiscale solution at final time $t=T$ respectively. $\varepsilon_{\star}^{H}$ denotes the relative error with coarse mesh size $H$, for $\star\in\{0,1\}$. Consequently, $\operatorname{CR}$ measures the convergence rate in $H$. In addition, we shall evaluate \eqref{eq:RK1-1} via Pad\'e approximations implemented via \texttt{EXPINT} package in \cite{berland2007expint}.

\begin{example}
\label{example1}
First, we consider the 2-d convective Allen-Cahn problem,
\begin{equation}
\label{eq:example1}
\pt u -\dive(\k\gd u)+\b\cdot\gd u = \frac{1}{\epsilon^{2}}(u-u^3),\quad\mbox{in }D\times(0,T],
\end{equation}
where $\epsilon=0.1$ measures the interface thickness, $D=[-1,1]^2$, and $T=1$ and time step size of $\dt = 2^{-10}$. The diffusion coefficient $\k=1$ and the incompressible velocity field is $\b:=\e^{-t}(\cos(2\pi y),\cos(2\pi x))^{T}$. The initial data is $u_{0}:=\sin(2\pi x)\sin(2\pi y)$.

%\begin{table}[!t]
%\centering
%\begin{tabular}{ccccccc}
%\toprule 
%\multicolumn{7}{c}{Multiscale approximation at $T=1$}\tabularnewline
%\midrule 
%\multirow{2}{*}{$H$} & \multicolumn{2}{c}{$\ell=0$} & \multicolumn{2}{c}{$\ell=1$} & \multicolumn{2}{c}{$\ell=2$}\tabularnewline
%\cmidrule{2-7} \cmidrule{3-7} \cmidrule{4-7} \cmidrule{5-7} \cmidrule{6-7} \cmidrule{7-7} 
% & $\varepsilon_{0}$ & $\text{CR}$ & $\varepsilon_{0}$ & $\text{CR}$ & $\varepsilon_{0}$ & $CR$\tabularnewline
%\midrule 
%$2^{-1}$ & 1.1294E--01 & -- & 1.9872E--01 & -- & 2.1165E--02 & -- \tabularnewline
%\midrule 
%$2^{-2}$ & 3.9160E--02 & 1.5281E+00  & 2.0238E--02 & 3.2956E+00 & 7.7650E--03 & 1.4466E+00 \tabularnewline
%\midrule 
%$2^{-3}$ & 6.5935E--03 & 2.5703E+00 & 3.5935E--03 & 2.4936E+00 & 1.4612E--03 & 2.4098E+00 \tabularnewline
%\midrule 
%$2^{-4}$ & 2.4986E--03 & 1.3999E+00 & 6.8477E--04 & 2.3917E+00 & 2.6845E--04 & 2.4445E+00 \tabularnewline
%\midrule 
%$2^{-5}$ & 1.2937E--03 & 9.4963E--01 & 3.1951E--04 & 1.0998E+00 & 1.4055E--04 & 9.3355E--01 \tabularnewline
%\bottomrule
%\end{tabular}
%\caption{\label{tab:example1}  Spatial accuracy for problem~\eqref{eq:example1} at the final time $T=1$ with varying coarse grid size $H$ and different level parameter $\ell$ for Haar wavelets.}
%\end{table}

\begin{table}[!t]
\centering
\begin{tabular}{ccccccc}
\toprule 
\multicolumn{7}{c}{Multiscale approximation at $T=1$}\tabularnewline
\midrule 
\multirow{2}{*}{$H$} & \multicolumn{2}{c}{$\ell=0$} & \multicolumn{2}{c}{$\ell=1$} & \multicolumn{2}{c}{$\ell=2$}\tabularnewline
\cmidrule{2-7} \cmidrule{3-7} \cmidrule{4-7} \cmidrule{5-7} \cmidrule{6-7} \cmidrule{7-7} 
 & $\varepsilon_{0}$ & $\text{CR}$ & $\varepsilon_{0}$ & $\text{CR}$ & $\varepsilon_{0}$ & $\text{CR}$\tabularnewline
\midrule 
$2^{-1}$ & 1.0638E+00 & -- & 2.4791E-02 & -- & 1.5226E-02 & -- \tabularnewline
\midrule 
$2^{-2}$ & 4.4884E-02 & 4.5669  & 9.2118E-03 & 1.4283 & 4.0592E-03 & 1.9073 \tabularnewline
\midrule 
$2^{-3}$ & 4.3052E-03 & 3.3821 & 1.2737E-03 & 2.8544 & 6.5153E-04 & 2.6393 \tabularnewline
\midrule 
$2^{-4}$ & 1.5135E-03 & 1.5081 & 3.0049E-04 & 2.0836 & 2.7358E-05 & 4.5738 \tabularnewline
\midrule 
$2^{-5}$ & 2.3453E-04 & 2.6901 & 4.8751E-05 & 2.6238 & 4.6474E-06 & 2.5575 \tabularnewline
\bottomrule
\end{tabular}
\caption{Relative errors and convergence rates for Example \ref{example1} at $T=1$ with varying parameters $(H,\ell)$.}
\label{tab:example1}
\end{table}

Table~\ref{tab:example1} reports the numerical results of \eqref{eq:RK1-1} with level parameter $\ell=0,1,2$ and varying coarse grid size $H=2^{-j}$, where $j\in\{1,\dots,5\}$. We observe that its accuracy improves as the coarse grid size $H$ decreases and the level parameter $\ell$ increases. In addition, all levels have a rate of convergence close to $1$ with time-stepping fixed, which coincides with the theoretical results given in Theorem \ref{thm:main-result}. Figure~\ref{fig:example1} depicts the snapshots of the reference and multiscale solution with $\ell=1$ and $H=2^{-4}$ and different time steps at $t=0,0.0098,0.125$ and $0.75$.

In Table~\ref{tab:timeexample1}, we present the relative errors and their corresponding convergence rates for $\ell=1$, where the temporal convergence rates are much better than expected (higher than 1) from theoretical results given in Theorem~\ref{thm:main-result}. We mention that for a fixed coarse size $H$, the convergence rate in time degrades as the spatial error gradually dominates the total error.
\begin{table}
\centering
\begin{tabular}{cccccc}
\toprule 
\multicolumn{6}{c}{Multiscale approximation at $T=0.01$}\tabularnewline
\midrule 
\multirow{1}{*}{$H$} & $\dt$ & $\varepsilon_{0}$ & $\text{CR}$ & $\varepsilon_{1}$ & $\text{CR}$\tabularnewline
\midrule 
$2^{-2}$ & $2^{-5}$ & 2.0231E--02  & -- & 2.0231E--02 & --\tabularnewline
\midrule 
$2^{-3}$ & $2^{-6}$ & 1.3950E--02 & 0.5363  & 1.3950E--02 & 6.4657E--01 \tabularnewline
\midrule 
$2^{-4}$ & $2^{-7}$ & 5.8310E--03 & 1.2584  & 5.8310E--03  & 1.7156E+00 \tabularnewline
\midrule 
$2^{-5}$ & $2^{-8}$ & 7.5210E--04 & 2.9547 & 7.5210E--04 & 2.1164E+00\tabularnewline
\bottomrule
\end{tabular}
\caption{\label{tab:timeexample1} Relative errors and convergence rates in time, for Example~\ref{example1} at $T=0.01$ decreasing  simultaneously the time step size $\dt$ and the coarse size $H$ with $\ell=1$.} 
\end{table}

\begin{figure}[!t]
\centering
\subfloat[\label{exp1:umsl}Multiscale solution.]{{\includegraphics[width=0.24\textwidth]{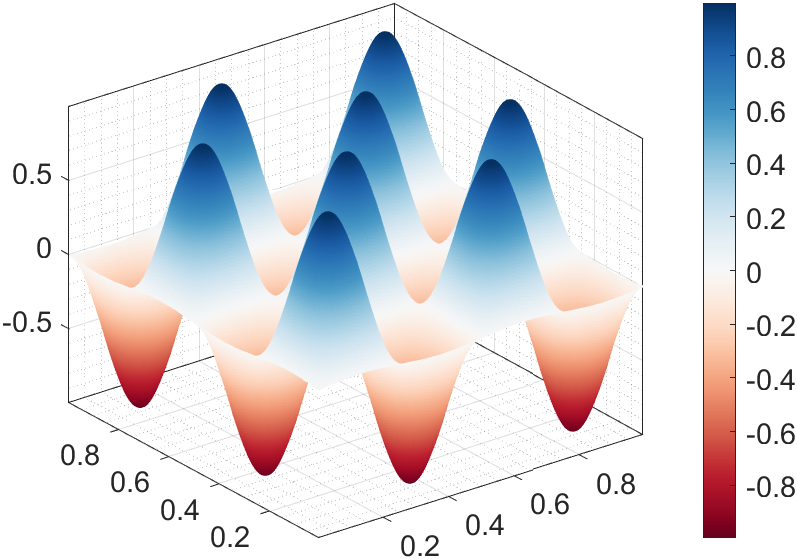}}~{\includegraphics[width=0.24\textwidth]{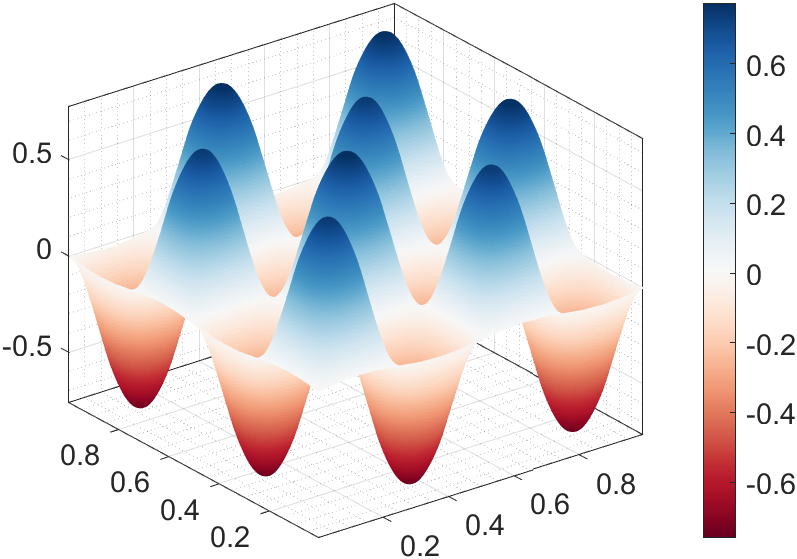}}~{\includegraphics[width=0.24\textwidth]{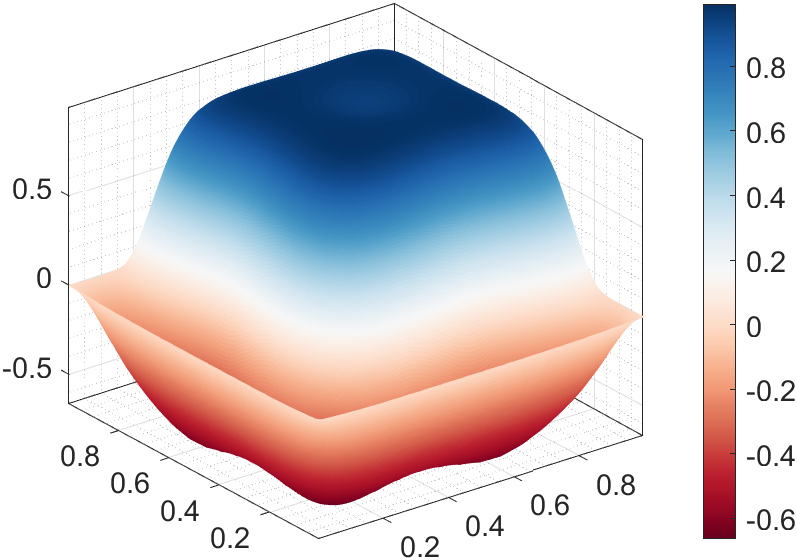}}~
~{\includegraphics[width=0.24\textwidth]{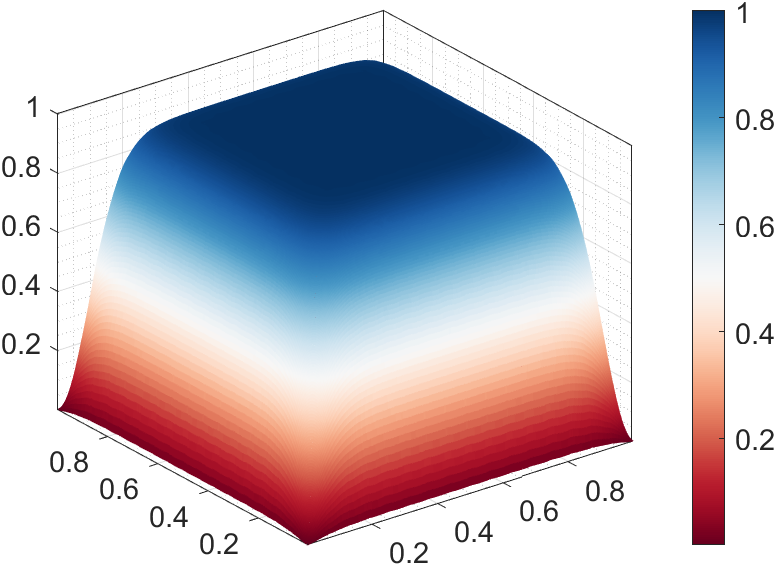}}}\\
\subfloat[exa1rf][Reference solution.]{{\includegraphics[width=0.24\textwidth]{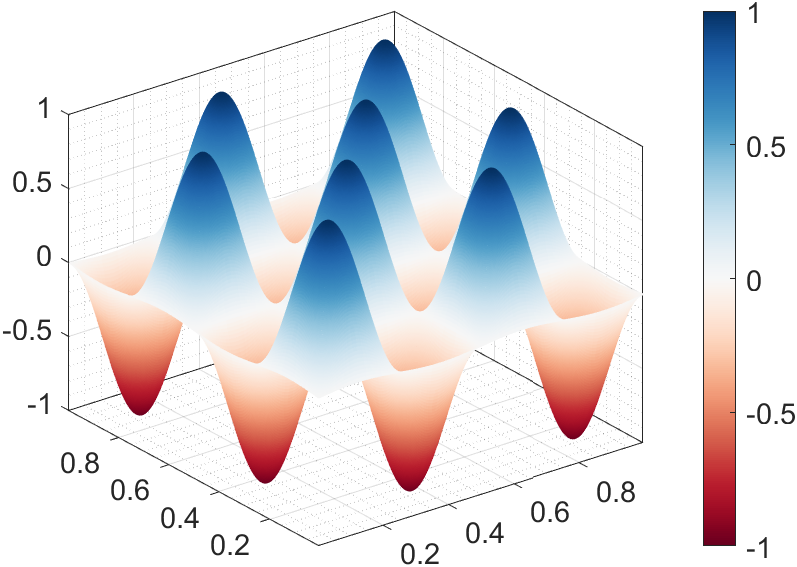}}
~{\includegraphics[width=0.24\textwidth]{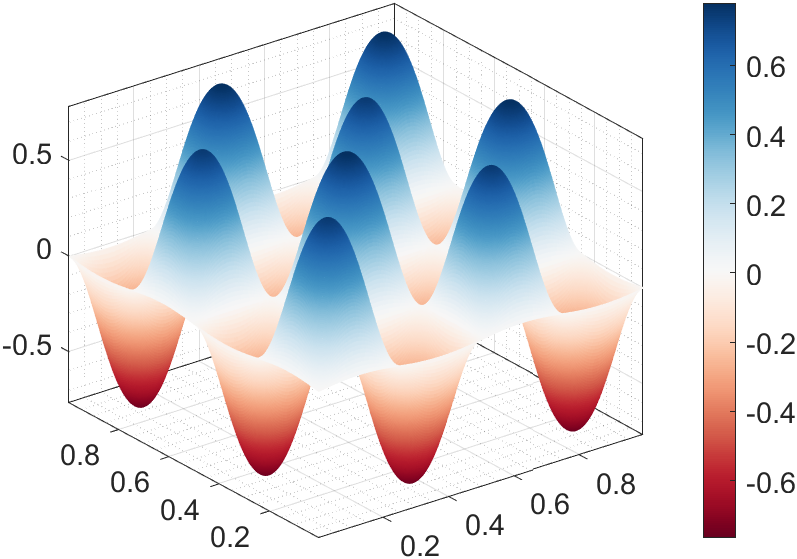}}~{\includegraphics[width=0.24\textwidth]{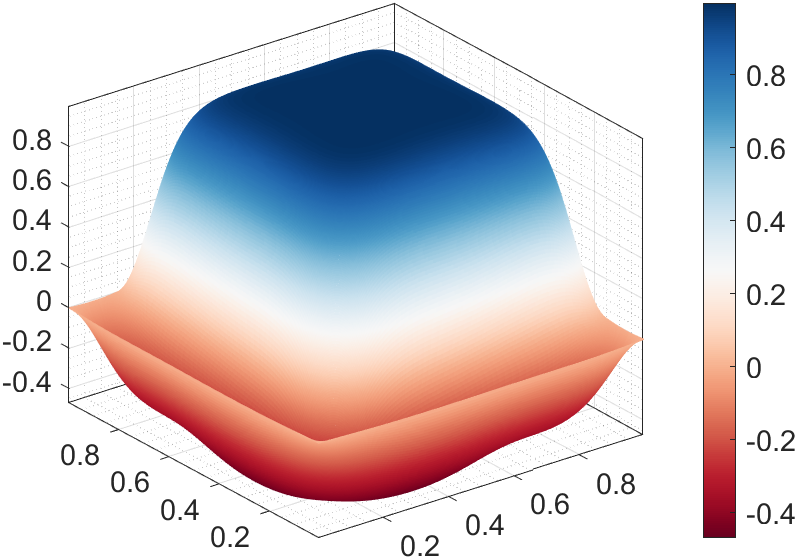}}~{\includegraphics[width=0.24\textwidth]{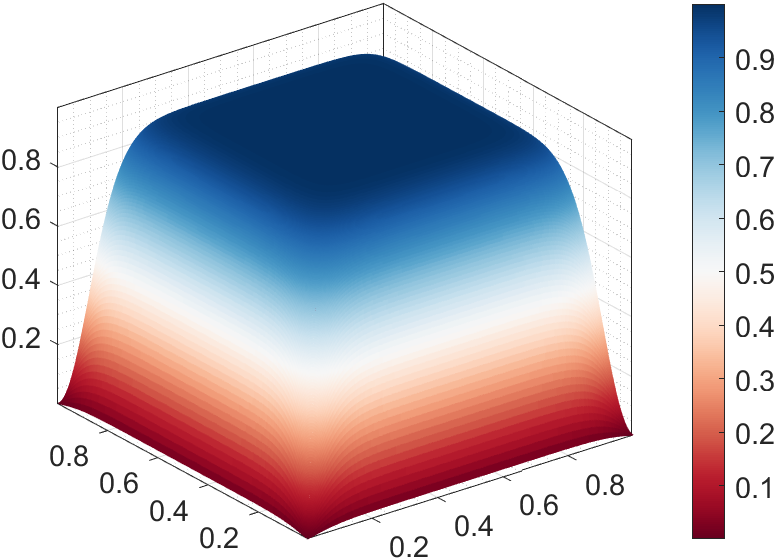}}}
\caption{Solutions of Example~\ref{example1} at $t=0,0.098,0.125$ and $0.750$ (a) Multiscale solution with $H=2^{-4}$ and $\ell=1$, and (b) the reference solution.}
\label{fig:example1}
\end{figure}

We present in Figure~\ref{fig:E-example1} the corresponding evolution of the classic free energy ${\cal E}$ (see, for instance, \citealp{Bartels2015numerical}) defined by
\[
{\cal E}[u] := \int_{D}\left(\frac{1}{2}|\gd u|^{2}+\frac{1}{4\epsilon^{2}}(u^{2}-1)^{2}\right)\dx,
\]%\noteLi{write down the formula for $E$} 
and $\max$-norm of the multiscale solution using $H=2^{-4}$ and $\ell=1$. We observe monotone decay of the discrete energy over time, and the discrete maximum principle is well-preserved.
\begin{figure}[!h]
\centering
\begin{tikzpicture}
\begin{axis}[
grid=minor,
scale = 1.2,
xlabel={\small{$t$}}, 
ylabel={\small{${\cal E}[\umslew]$}},
]
\addplot[DarkBlue] table [x=T,y=EN] {EMei2d1_Hierarchical.dat};
\end{axis}
\end{tikzpicture}~
\begin{tikzpicture}
\begin{axis}[
grid=minor,
scale = 1.2,
xlabel={\small{$t$}}, 
ylabel={\small{$\|\umslew\|_{\LinfD}$}},
]
\addplot[Red] table [x=T,y=MAX] {EMei2d1_Hierarchical.dat};
\end{axis}
\end{tikzpicture}
\caption{\label{fig:E-example1} Time-dependent normalized discrete total energy (left) and maximum values of the multiscale solution $\umslew$ (right) over time evolution with $\dt=2^{-10}$ in Example~\ref{example1}.}
\end{figure}
\end{example}

\begin{example}
\label{example2}
Next, we investigate the performance of the proposed method for the convective Allen-Cahn problem with heterogeneous coefficient,
\begin{equation}
\label{eq:example2}
\pt u -\dive(\k_{2}\gd u)+\b\cdot\gd u = \frac{1}{\epsilon^{2}}(u-u^3),\quad\mbox{in }D\times(0,T],
\end{equation}
where $D=[0,1]^2$, $T=0.016$, and $\epsilon=0.05$. The heterogeneous coefficient $\k_{2}$ is depicted in Figure~\ref{fig:perm-example2}, which takes value $1$ in the background (gray region) and $10^{4}$ in other parts (red region).% {\color{red} multi-label?}
\begin{figure}[!h]
\centering
\subfloat[$\k_{1}$]{{\includegraphics[width=0.28\textwidth]{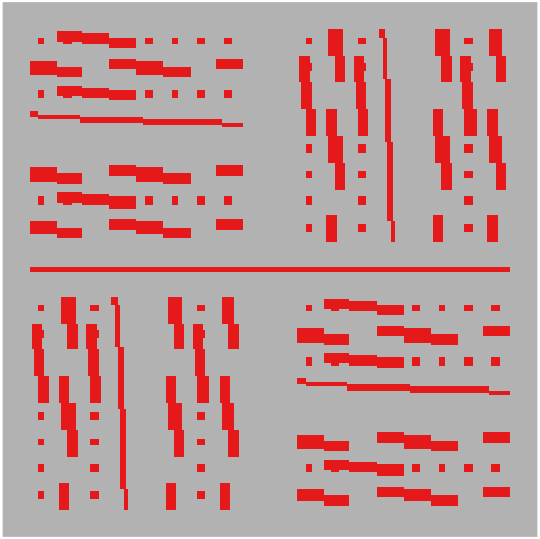}}\label{fig:perm-example2s}}~
\subfloat[$\k_{2}$]{{\includegraphics[width=0.28\textwidth]{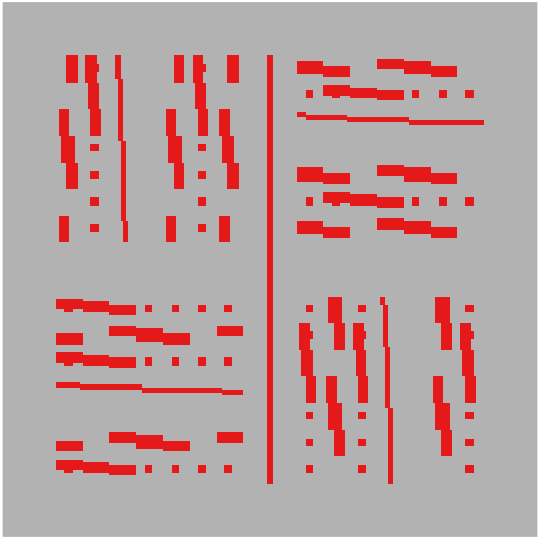}}\label{fig:perm-example2}}~
\subfloat[$\k_{3}$]{{\includegraphics[width=0.28\textwidth]{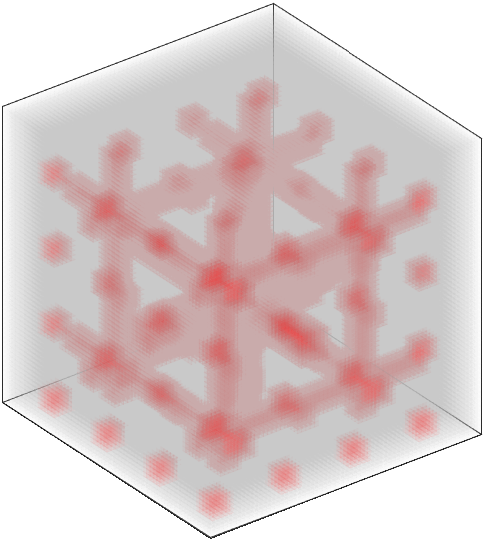}}\label{fig:perm3d-ex3}}
\caption{\label{fig:perm-example2-2s} Permeability fields (color online).}
%\subfloat[$u_{0}$]{{\includegraphics[width=0.34\textwidth]{fig/ex2_nx512_Tnit100_iter1_con1_uref.png}}\label{fig:u0-example2}}
\end{figure}

We set the velocity term as
\[
\b = \alpha(\cos(k\pi y)\sin(k\pi x),-\cos(k\pi x)\sin(k\pi y))^{T},
\]
%\begin{figure}[!t]
%\centering
%\subfloat{{\includegraphics[width=0.35\textwidth]{fig/ex2_nx512_beta1.png}}\label{fig:beta1-example2}}~
%\subfloat{{\includegraphics[width=0.35\textwidth]{fig/ex2_nx512_beta2.png}}\label{fig:beta2-example2}}
%\caption{\label{fig:beta-example2} First and second component of $\b$ for problem~\eqref{eq:example2}.}
%\end{figure}
where $\alpha=2$ and $k=24$. The initial data is $u_{0}:=0.1\sin(\pi x)\sin(\pi y)$. %Figure~\ref{fig:u0-ref-example2} (left) depicts the initial condition for example~\eqref{eq:example2}.

Table~\ref{tab:example2} exhibits convergence history of \eqref{eq:RK1-1} with varying coarse grid size $H=2^{-j}$, where $j\in\{1,\dots,5\}$ and the level parameter of $\ell=0,1,2$. We observe convergence as the coarse grid size $H$ decreases and level parameter $\ell$ increases, which agrees with Example \ref{eq:example1}. Figure~\ref{fig:example2} depicts the snapshots of the multiscale solution with $\ell=2$ and $H=2^{-4}$, and reference solutions at $t=0$, $1.5625\text{E-}04$, $0.004$ and $0.012$.

\begin{table}[!h]
\centering
\begin{tabular}{ccccccc}
\toprule 
\multicolumn{7}{c}{Multiscale approximation at $T=0.016$}\tabularnewline
\midrule 
\multirow{2}{*}{$H$} & \multicolumn{2}{c}{$\ell=0$} & \multicolumn{2}{c}{$\ell=1$} & \multicolumn{2}{c}{$\ell=2$}\tabularnewline
\cmidrule{2-7} \cmidrule{3-7} \cmidrule{4-7} \cmidrule{5-7} \cmidrule{6-7} \cmidrule{7-7} 
 & $\varepsilon_{0}$ & $\text{CR}$ & $\varepsilon_{0}$ & $\text{CR}$ & $\varepsilon_{0}$ & $\text{CR}$\tabularnewline
\midrule 
$2^{-1}$ & 1.0748E-01 & -- & 3.4636E-02 & -- & 1.0946E-02 & -- \tabularnewline
\midrule 
$2^{-2}$ & 5.3665E-02 & 1.0020 & 1.0082E-02 & 1.7805 & 4.4932E-03 & 1.2846 \tabularnewline
\midrule 
$2^{-3}$ & 1.2131E-02 & 2.1453 & 3.6757E-03 & 1.4557 & 1.7822E-03 & 1.3341 \tabularnewline
\midrule 
$2^{-4}$ & 5.6863E-03 & 1.0931 & 1.3696E-03 & 1.4243 & 3.3765E-04 & 2.4001 \tabularnewline
\midrule 
$2^{-5}$ & 3.1386E-03 & 0.8574 & 3.1399E-04 & 2.1250 & 6.5368E-05 & 2.3689 \tabularnewline
\bottomrule
\end{tabular}
\caption{Relative errors and convergence rates for Example \ref{example2} at $T=0.016$ with varying parameters $(H,\ell)$.}
\label{tab:example2} 
\end{table}
In Table~\ref{tab:timeexample2}, we present the relative errors and their corresponding convergence rates for $\ell=1$, where the temporal convergence rates are much faster than expected (higher than 1) from theoretical results given in Theorem~\ref{thm:main-result}. As mentioned in Example~\ref{example1}, for a fixed coarse size $H$, the convergence rate in time also degrades as the spatial error gradually dominates the total error.
\begin{table}
\centering
\begin{tabular}{cccccc}
\toprule 
\multicolumn{6}{c}{Multiscale approximation at $T=0.016$}\tabularnewline
\midrule 
\multirow{1}{*}{$H$} & $\dt$ & $\varepsilon_{0}$ & $CR$ & $\varepsilon_{1}$ & $CR$\tabularnewline
\midrule 
$2^{-2}$ & $2^{-5}$ & 1.0082E--02 & -- & 2.0231E--02 & --\tabularnewline
\midrule 
$2^{-3}$ & $2^{-6}$ & 3.6751E--03  & 1.4559E+00 & 1.3950E--02 & 9.7428E--01\tabularnewline
\midrule 
$2^{-4}$ & $2^{-7}$ & 1.3686E--03 & 1.4251E+00  & 5.8310E--03 & 8.5800E--01\tabularnewline
\midrule 
$2^{-5}$ & $2^{-8}$ & 3.1213E--04  & 2.1325E+00 & 7.5210E--04 & 1.2340E+00 \tabularnewline
\bottomrule
\end{tabular} 
\caption{\label{tab:timeexample2} Relative errors and convergence rates in time, for Example~\ref{example2} at $T=0.016$ decreasing  simultaneously the time step size $\dt$ and the coarse size $H$ with $\ell=1$.} 
\end{table}

\begin{figure}[!t]
\centering
\subfloat[Multiscale solution.]{{\includegraphics[width=0.24\textwidth]{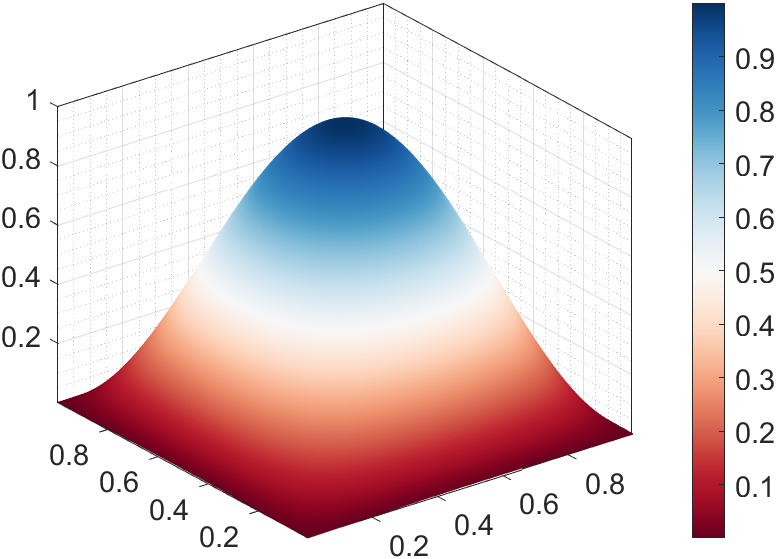}}~{\includegraphics[width=0.24\textwidth]{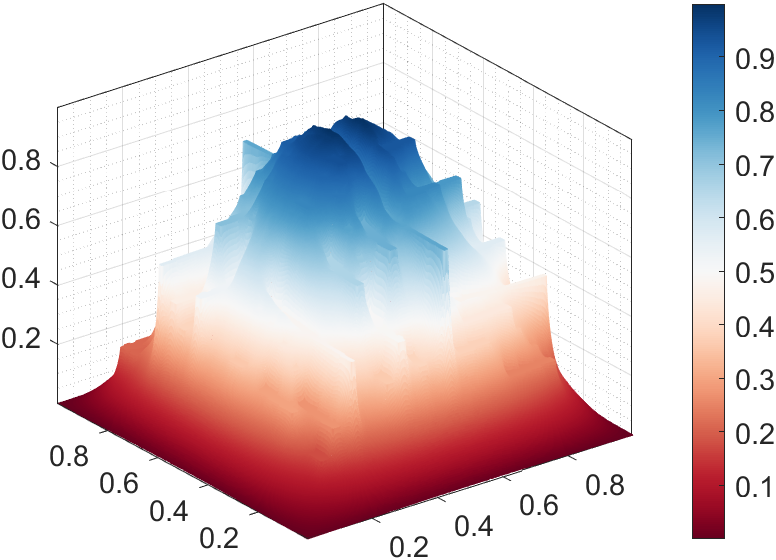}}~{\includegraphics[width=0.24\textwidth]{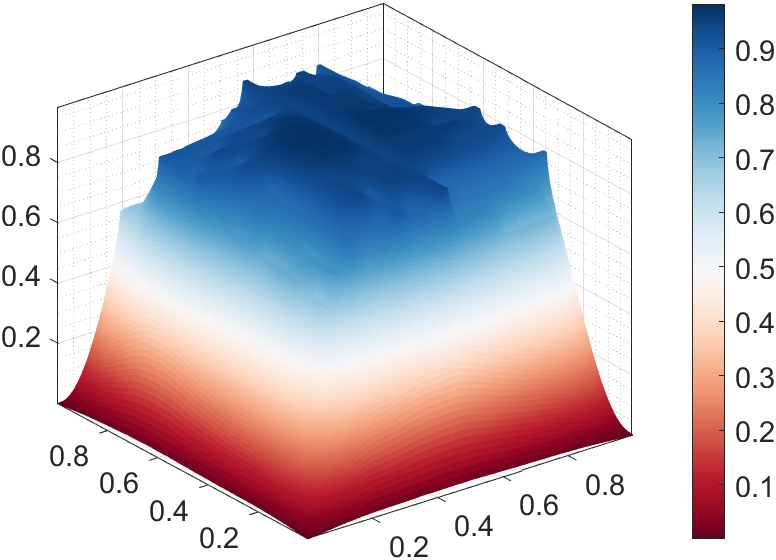}}~{\includegraphics[width=0.24\textwidth]{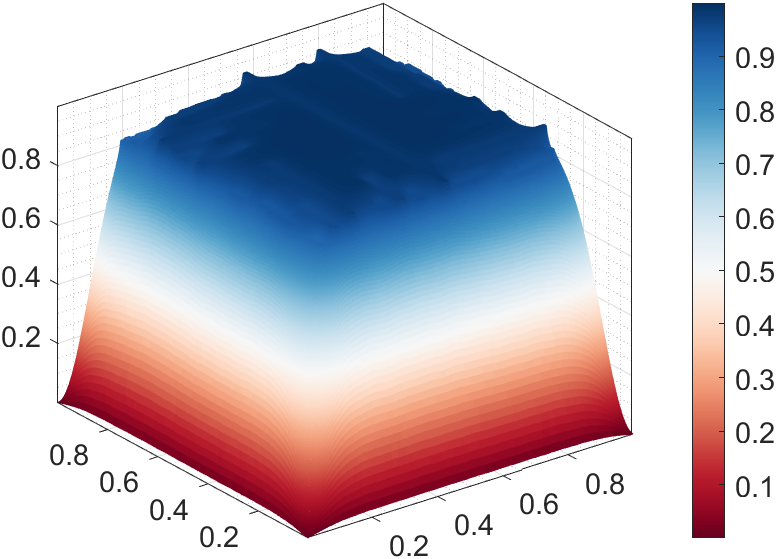}}}\\
\subfloat[exa1rf][Reference solution.]{{\includegraphics[width=0.24\textwidth]{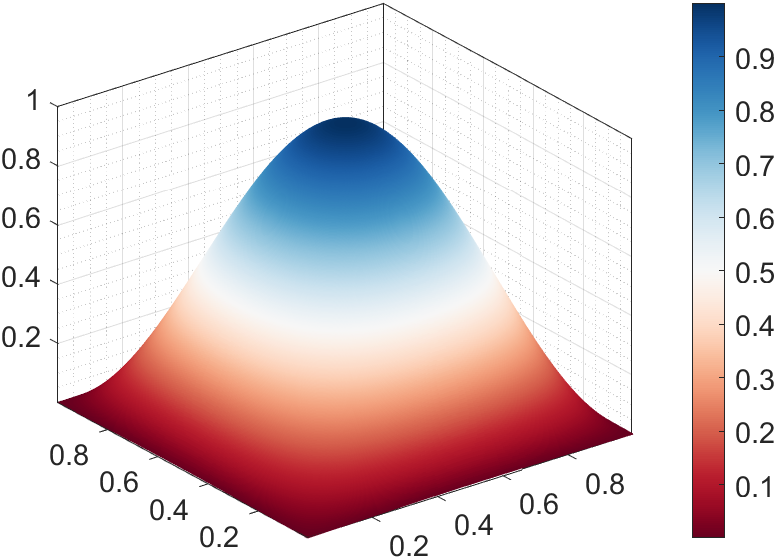}\label{fig:u0-ref-example2}}
~{\includegraphics[width=0.24\textwidth]{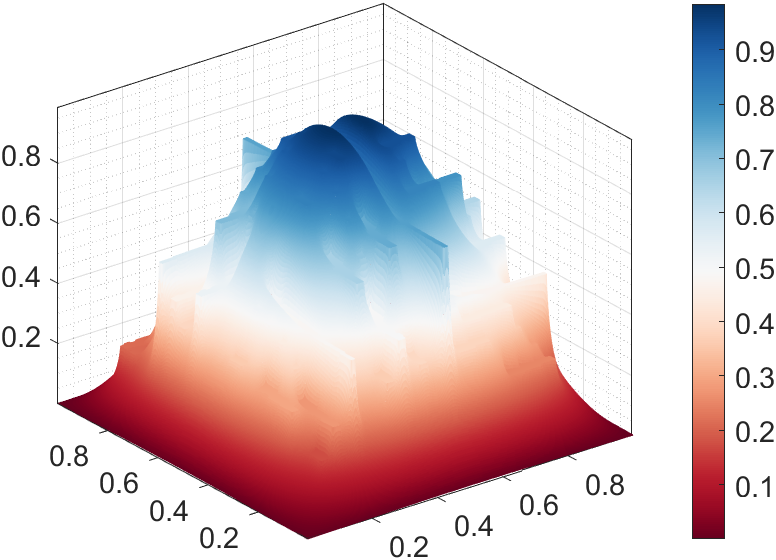}}~{\includegraphics[width=0.24\textwidth]{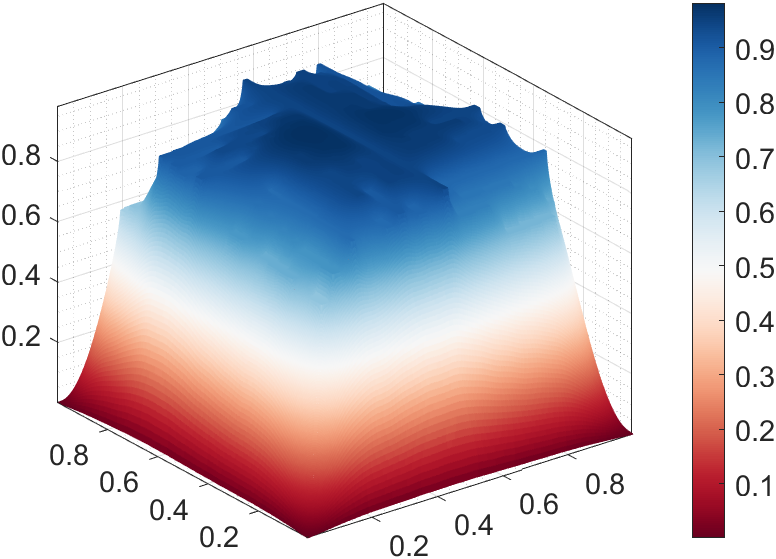}}~{\includegraphics[width=0.24\textwidth]{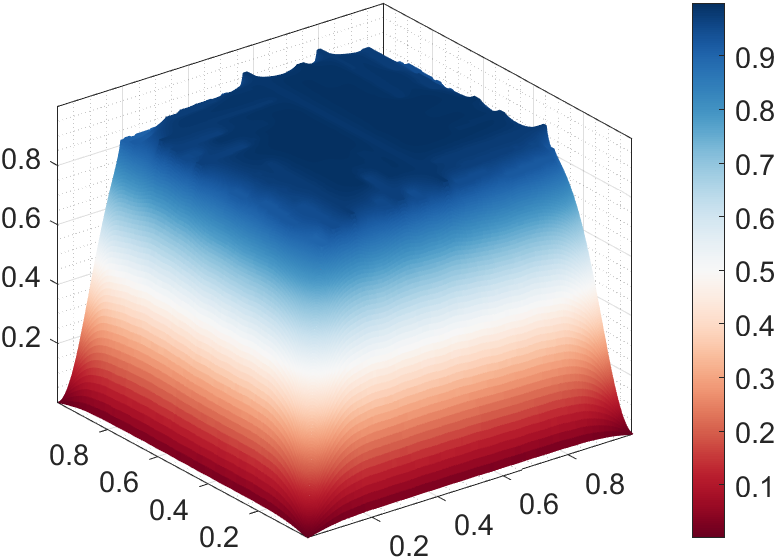}}}
\caption{Solutions of Example~\ref{example2} at $t=0$, $1.5625$E-04, $0.004$ and $0.012$ (a)Multiscale solution with $H=2^{-4}$ and $\ell=2$, and (b) the reference solution.}
\label{fig:example2}
\end{figure}

We present in Figure~\ref{fig:E-example2} the evolution of the classic free energy ${\cal E}$ and $\max$-norm of the multiscale solution of Example~\ref{example2}. Again, it is observed that the discrete energy decays monotonically over time, and the discrete maximum principle is preserved.

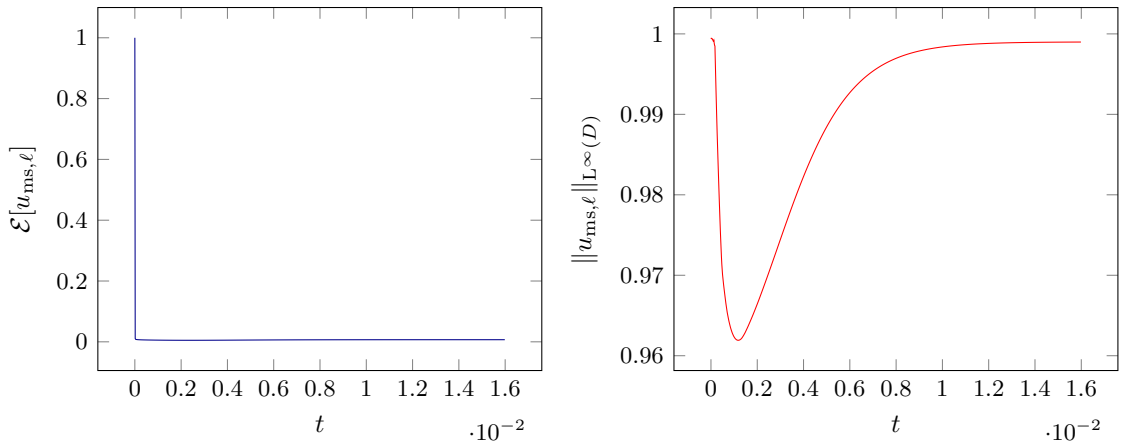
\begin{figure}[!h]
\centering
\begin{tikzpicture}
\begin{axis}[
grid=minor,
scale = 1.2,
xlabel={\small{$t$}}, 
ylabel={\small{${\cal E}[\umslew]$}},
]
\addplot[DarkBlue] table [x=T,y=EN] {EMei2d2_Hierarchical.dat};
\end{axis}
\end{tikzpicture}~
\begin{tikzpicture}
\begin{axis}[
grid=minor,
scale = 1.2,
xlabel={\small{$t$}}, 
ylabel={\small{$\|\umslew\|_{\LinfD}$}},
]
\addplot[Red] table [x=T,y=MAX] {EMei2d2_Hierarchical.dat};
\end{axis}
\end{tikzpicture}
\caption{\label{fig:E-example2} Time-dependent normalized discrete total energy (left) and maximum values of the multiscale solution $\umslew$ (right) over time evolution with $\dt=2^{-10}$ in Example~\ref{example2}.}
\end{figure}

\end{example}

\begin{example}
\label{example3}
In this numerical example, we consider a 3-d convective Allen-Cahn problem,
\begin{equation}
\label{eq:example3}
\pt u -\dive(\k_{3}\gd u)+\b\cdot\gd u = \frac{1}{\epsilon^{2}}(u-u^3),\quad\mbox{in }D\times(0,T],
\end{equation}
where $D=[0,1]^3$, $T=0.016$, and $\epsilon=0.1$. We also consider a highly heterogeneous permeability coefficient $\k_{3}$ shown in Figure~\ref{fig:perm3d-ex3} and the velocity field is $\b = (y-0.5,0.5 - x,0)^{T}$, 
%which is depicted in Figure~\ref{fig:beta-example3}.
%\begin{figure}[!t]
%\centering
%\subfloat{{\includegraphics[width=0.34\textwidth]{fig/ex1_nx64_beta1.png}}\label{fig:beta1-ex3}}~\subfloat{{\includegraphics[width=0.34\textwidth]{fig/ex1_nx64_beta2.png}}\label{fig:beta2-ex3}}
%\caption{\label{fig:beta-example3}First and second component of $\b$ for problem~\eqref{eq:example3}. {\color{red} change [0,60] to actual position in these two figures!!}}
%\end{figure}
the initial data is $u_{0}=0.1\sin(\pi x)\sin(\pi y)\sin(\pi z)$.

\begin{table}[!h]
\centering
\begin{tabular}{ccccc}
\toprule 
\multicolumn{5}{c}{Multiscale approximation at $T=0.016$}\tabularnewline
\midrule 
$H$ & $\varepsilon_{0}$ &  $\text{CR}$ & $\varepsilon_{1}$ & $\text{CR}$ \tabularnewline
\midrule 
$2^{-1}$ &  2.2542E-01 & -- &3.6893E-01 & -- \tabularnewline 	
\midrule 
$2^{-2}$ & 2.1299E-01 & 0.0606 & 3.5376E-01  & 0.0818 \tabularnewline
\midrule 
$2^{-3}$ & 5.1638E-02 & 2.0443 & 1.7974E-01 & 0.9768 \tabularnewline
\midrule 
$2^{-4}$ & 1.9051E-02 & 1.4386 & 7.2222E-02 & 1.3154 \tabularnewline
\bottomrule
\end{tabular}
\caption{ 
Relative errors and convergence rates, for Example \ref{example3} at $T=0.016$ with $\ell=1$ and varying mesh size $H$.
}
\label{tab:example3}

\end{table}

Numerical results for \eqref{eq:example3} with the fixed level parameter of $\ell=1$ on each coarse neighborhood with varying coarse grid size are reported in Table~\ref{tab:example3}. Convergence with respect to coarse grid size $H$ and wavelet level $\ell$ is observed as expected. We notice that the spatial accuracy in the $\Lt$-norm is just about $1$ as expected. Figure~\ref{fig:example3} depicts the profile of the reference and multiscale solution for problem~\eqref{eq:example3} at different instants $t=0$, $0.004$, $0.008$ and $0.016$, which visually verifies the accuracy of our multiscale methods. 

\begin{figure}[!t]
\centering
\subfloat[Multiscale solution.]{{\includegraphics[width=0.24\textwidth]{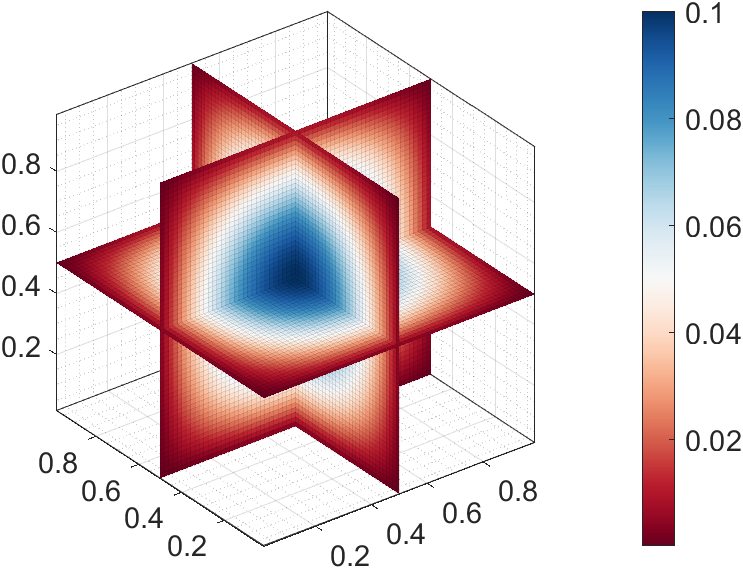}}~{\includegraphics[width=0.24\textwidth]{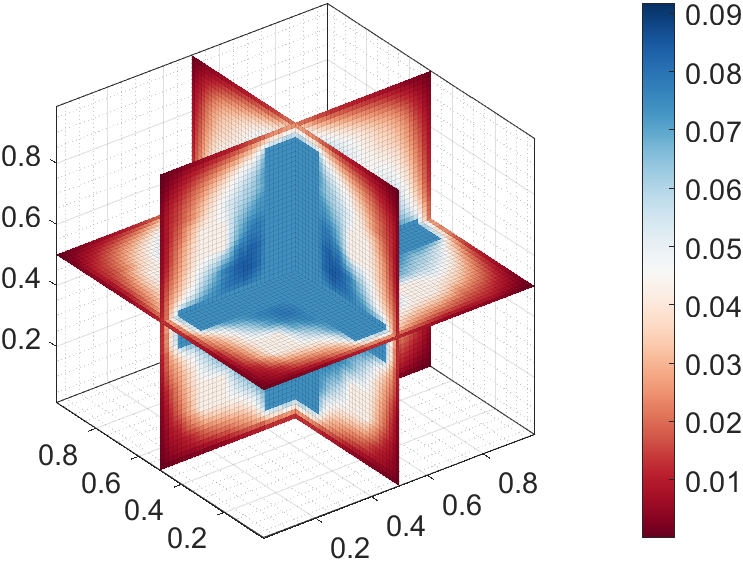}}~{\includegraphics[width=0.24\textwidth]{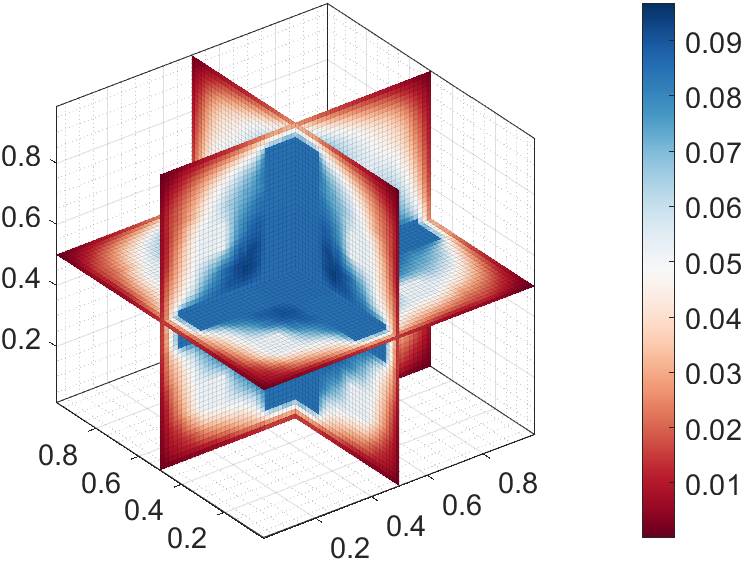}}~{\includegraphics[width=0.24\textwidth]{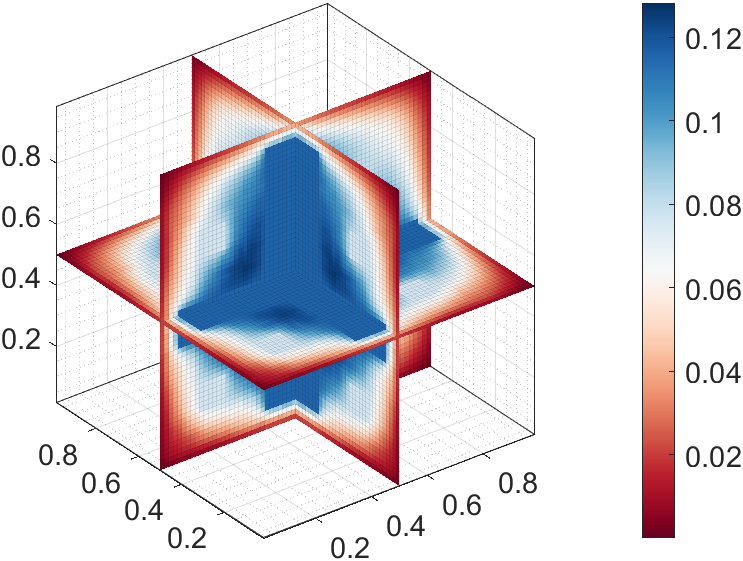}}}\\
\subfloat[exa1rf][Reference solution.]{{\includegraphics[width=0.24\textwidth]{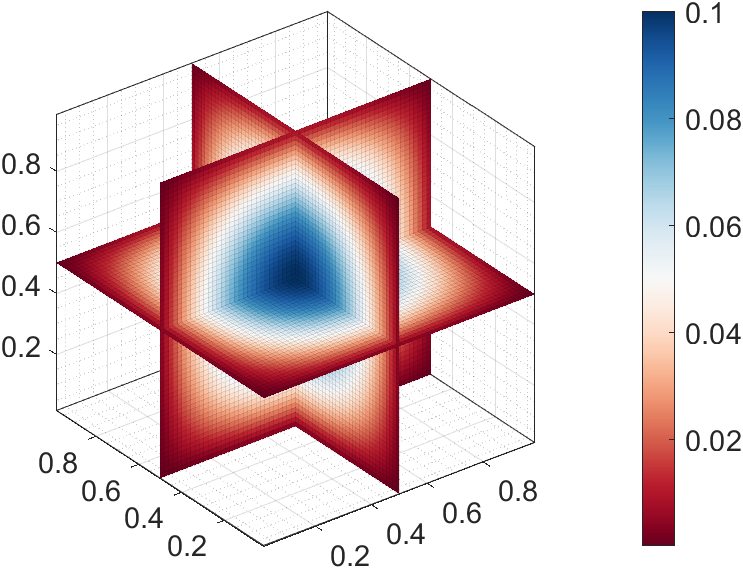}}
~{\includegraphics[width=0.24\textwidth]{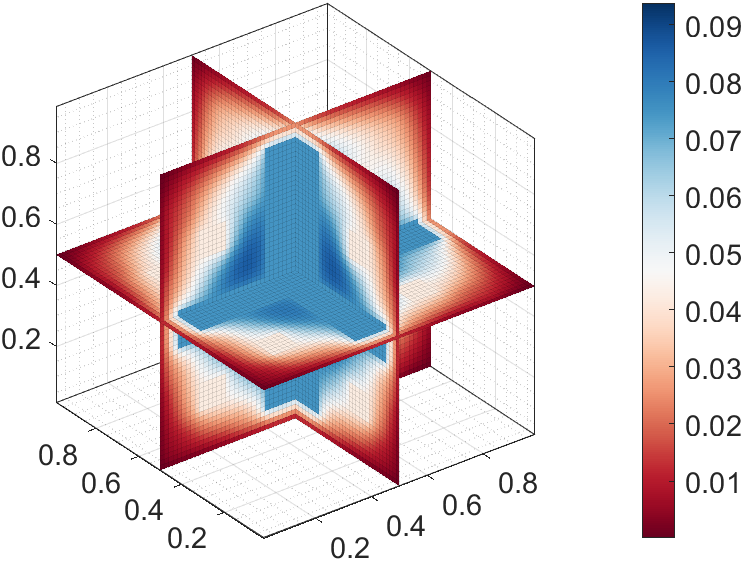}}~{\includegraphics[width=0.24\textwidth]{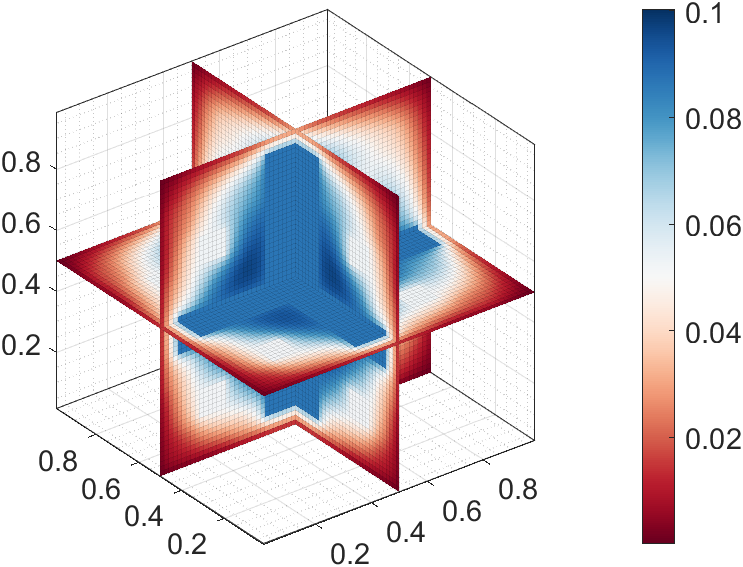}}~{\includegraphics[width=0.24\textwidth]{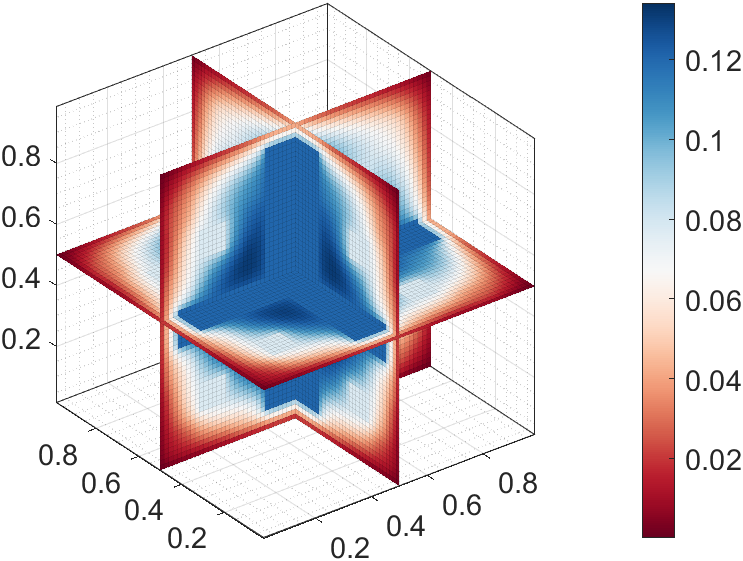}}}
\caption{\label{fig:example3} Solutions of Example~\ref{example3} at instants $t=0$, $0.004$, $0.008$ and $0.016$ (a) Multiscale solution using coarse size of $H=2^{-4}$ and level parameter $\ell=2$, and (b) the reference solution.} 
%{\color{red} change [0,60] to actual position in these two figures!!}}
\end{figure}

\end{example}

\begin{example}
\label{example1s}
% Example 4 from systems in simulation! 
In this test, we consider the Schnakenberg reactive model with constant convection, which studies the limit cycle behaviors of two-component chemical reactions \citep{schnakenberg1979simple,montanelli2020solving}:  
\begin{equation}
\label{eq:example1s}
\begin{aligned}
\pt \uo -\dive(\k_{1}\gd\uo)+\bo\cdot\gd\uo& = \gamma(a-\uo+\uo^{2}\ut),&&\quad\mbox{in }D\times(0,T]\\
\pt \ut -\dive(\k_{2}\gd\ut)+\bt\cdot\gd\ut& = \gamma(b-\uo^{2}\ut),&&\quad\mbox{in }D\times(0,T]
\end{aligned}
\end{equation}
where $\uo$ and $\ut$ represent the concentrations of the chemical species assumed to be maintained at constant concentrations $a$ and $b$. We take $D:=[0,30]\times[0,30]$, $T=5$, $\gamma=3$, $a=0.1$ and $b=0.9$. $\k_{1}=1$, $\k_{2}=10$ and $\bo = \bt = [-1,-1]^{T}$ are constant coefficients and constant velocities. The initial data is
\begin{align}
\uo(\x,0)&=1-e^{-2[(x-L_{x}/2.15)^{2}+(y-L_{x}/2.15)^{2}]},\\
\ut(\x,0)&=\frac{b}{(a+b)^{2}}-e^{-2[(x-L_{x}/2)^{2}+(y-L_{x}/2)^{2}]},
\end{align}
with $L_{x}=30$. The simulation is run for $2^{9}$ time steps. 

In Table \ref{tab:example1s}, we present the relative errors corresponding to varying coarse mesh size for the problem~\eqref{eq:example1s} at the final time $T$. We observe that both relative errors decay as $\ell$ increases.

%We observe that the spatial accuracy is \color{Red}{higher than one for solutions $\uo$ and $\ut$, which matches our theoretical results}. Therefore, we have a good performance of the EI-WEMsFEM approach. Figure~\ref{fig:example1s} depicts the solution profiles at the final instant $T=5$.

Figure~\ref{fig:example1s} depicts the profile of the references and multiscale solutions using our approach to approximate the two-component coupled system \eqref{eq:example1s}. %We use the different levels $\ell=0,1$ and $2$ of the wavelets, and we choose a coarse grid size of $H=2^{-4}$ in the terminal time. 
We can see that the multiscale method can capture most small-scale details of the reference solution.
\begin{table}
\centering
\centering
\begin{tabular}{ccccccc}
\toprule 
\multicolumn{7}{c}{Multiscale approximation for $\uo$ at $T=5$}\tabularnewline
\midrule 
\multirow{2}{*}{$H$} & \multicolumn{2}{c}{$\ell=0$} & \multicolumn{2}{c}{$\ell=1$} & \multicolumn{2}{c}{$\ell=2$}\tabularnewline
\cmidrule{2-7} \cmidrule{3-7} \cmidrule{4-7} \cmidrule{5-7} \cmidrule{6-7} \cmidrule{7-7} 
 & $\varepsilon_{1}$ & $\varepsilon_{0}$ & $\varepsilon_{1}$ & $\varepsilon_{0}$ & $\varepsilon_{1}$ & $\varepsilon_{0}$\tabularnewline
\midrule 
$2^{-1}$ & 1.6903E+00 & 6.3640E-01 & 1.2440E+00 & 4.6745E-01 & 8.8215E-01 & 3.0306E-01 \tabularnewline
\midrule 
$2^{-2}$ & 6.324E3-01 & 1.9572E-01 & 5.4738E-01 & 1.4371E-01 & 3.2769E-01 & 7.0549E-02 \tabularnewline
\midrule 
$2^{-3}$ & 4.0562E-01 & 9.5299E-02 & 2.1348E-01 & 4.0011E-02 & 1.2758E-01 & 2.1181E-02 \tabularnewline
\midrule 
$2^{-4}$ & 1.4803E-01 & 2.0796E-02 & 4.9016E-02 & 5.4210E-03 & 2.0146E-02 & 3.4661E-03 \tabularnewline
\midrule 
$2^{-5}$ & 3.5231E-02 & 3.8360E-03 & 1.2758E-02 & 3.3414E-03 & 9.1274E-03 & 3.3328E-03 \tabularnewline

%\bottomrule
%\end{tabular}
%\begin{tabular}{ccccccc}
\toprule 
\multicolumn{7}{c}{Multiscale approximation for $\ut$ at $T=5$}\tabularnewline
\midrule 
\multirow{2}{*}{$H$} & \multicolumn{2}{c}{$\ell=0$} & \multicolumn{2}{c}{$\ell=1$} & \multicolumn{2}{c}{$\ell=2$}\tabularnewline
\cmidrule{2-7} \cmidrule{3-7} \cmidrule{4-7} \cmidrule{5-7} \cmidrule{6-7} \cmidrule{7-7} 
 & $\varepsilon_{1}$ & $\varepsilon_{0}$ & $\varepsilon_{1}$ & $\varepsilon_{0}$ & $\varepsilon_{1}$ & $\varepsilon_{0}$\tabularnewline
\midrule 
$2^{-1}$ & 7.4820E-01 & 4.2152E-01 & 5.3147E-01 & 2.9112E-01 & 3.8746E-01 & 1.6973E-01 \tabularnewline
\midrule 
$2^{-2}$ & 2.6432E-01 & 1.3413E-01 & 1.6867E-01 & 7.5735E-02 & 7.6502E-02 & 2.5090E-02 \tabularnewline
\midrule 
$2^{-3}$ & 1.2343E-01 & 4.7182E-02 & 5.1759E-02 & 1.9423E-02 & 2.4450E-02 & 9.3365E-03 \tabularnewline
\midrule 
$2^{-4}$ & 2.9775E-02 & 9.4506E-03 & 8.8690E-03 & 1.9526E-03 & 4.4339E-03 & 1.5533E-03 \tabularnewline
\midrule 
$2^{-5}$ & 7.8642E-03 & 1.6049E-03 & 4.1623E-03 & 1.5737E-03 & 3.7082E-03 & 1.4877E-03 \tabularnewline
\bottomrule
\end{tabular}
\caption{\label{tab:example1s}Relative errors for Example~\ref{example1s} at $T=5$ with varying parameters $(H,\ell)$.}
\end{table}

\begin{figure}[!h]
\centering
\subfloat[$\ell=0$.]{{\includegraphics[width=0.25\textwidth]{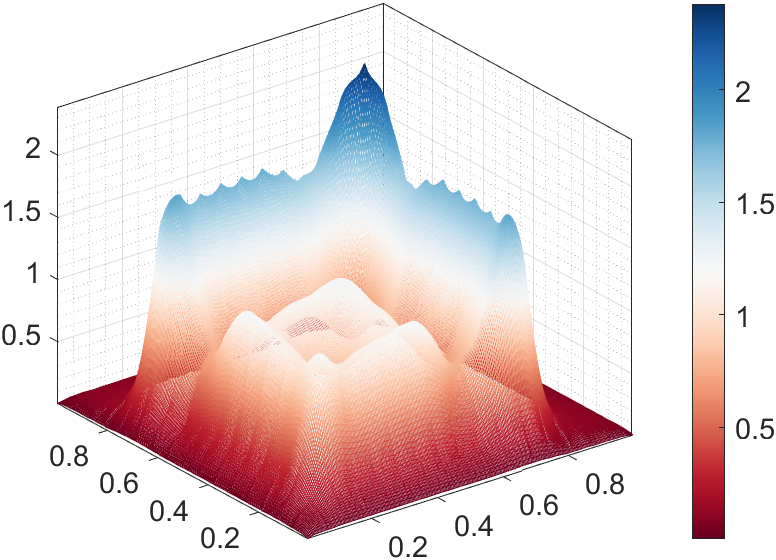}}~{\includegraphics[width=0.25\textwidth]{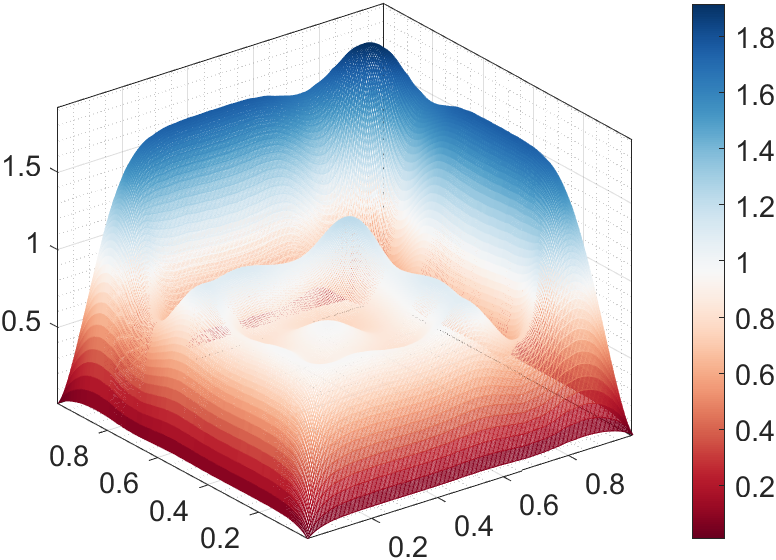}}}~
\subfloat[$\ell=1$.]{{\includegraphics[width=0.25\textwidth]{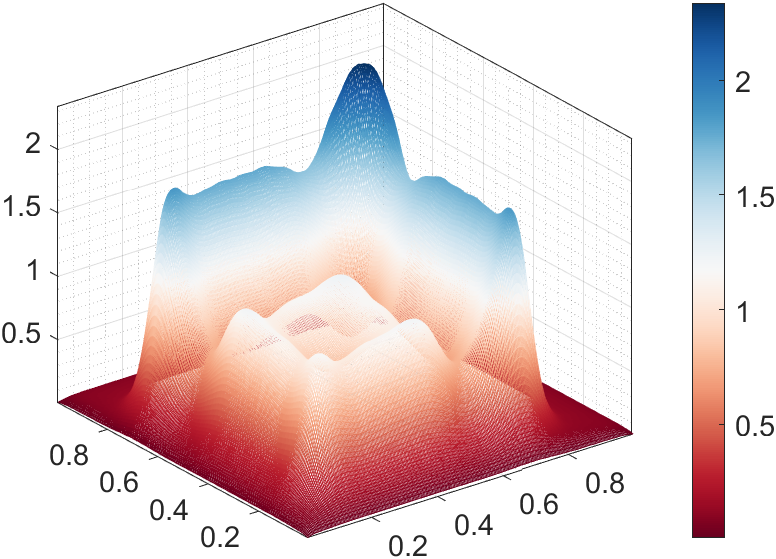}}~{\includegraphics[width=0.25\textwidth]{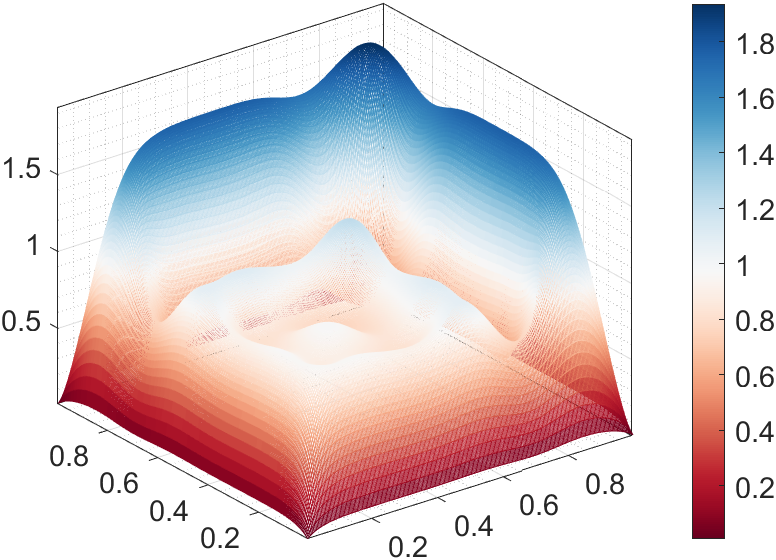}}}\\
\subfloat[$\ell=2$.]{{\includegraphics[width=0.25\textwidth]{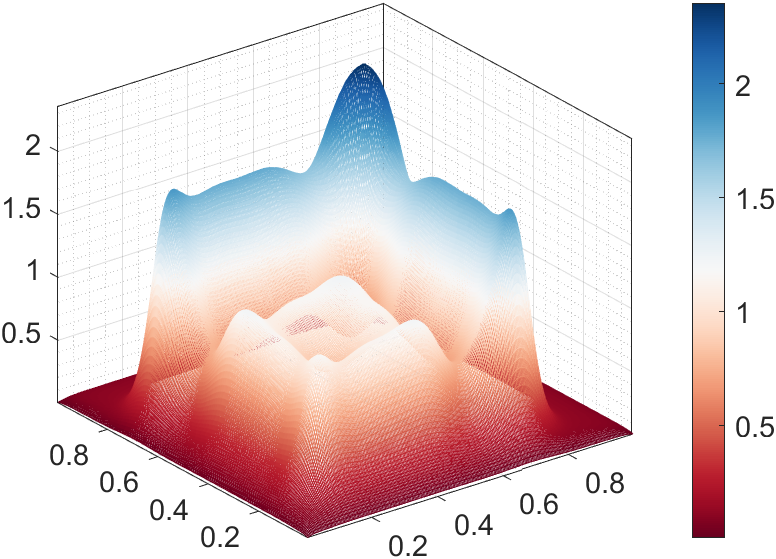}}~{\includegraphics[width=0.25\textwidth]{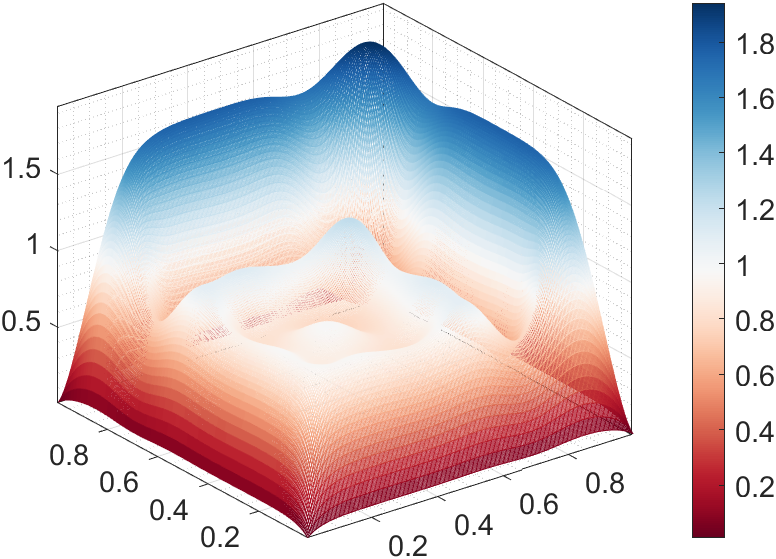}}}~
\subfloat[exa1rf][Reference solution.]{{\includegraphics[width=0.25\textwidth]{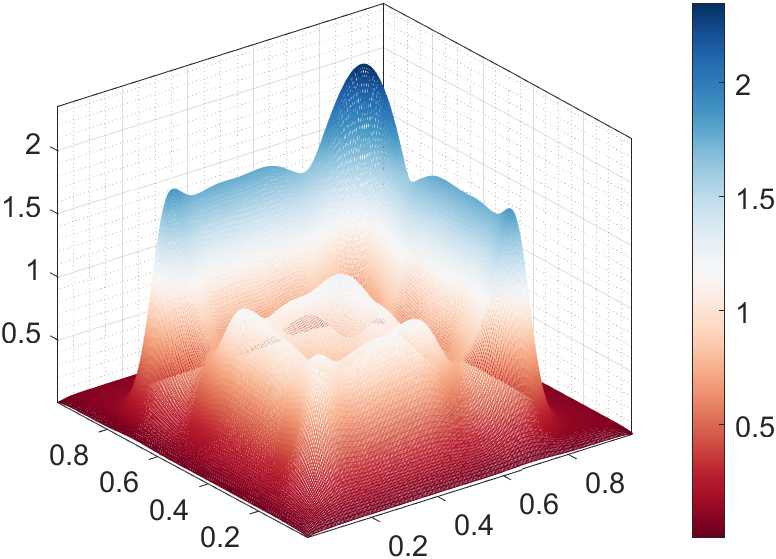}}
~{\includegraphics[width=0.25\textwidth]{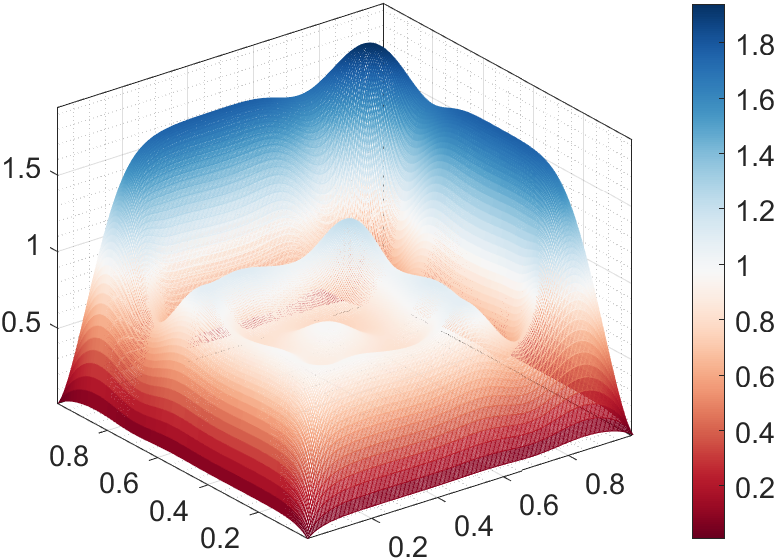}}}
\caption{\label{fig:example1s} Multiscale solution with $2^{9}$ time-steps with $H=2^{-4}$ and (a) $\ell=0$, (b) $\ell=1$ and (c) $\ell=2$,  and (d) the reference solution of Example~\ref{example1s} at final time $T=5$.}
\end{figure}
\end{example}

\begin{example}
\label{example2s}
% Example 4 from systems in simulation! 
The last example is to investigate our method for the following Schnakenberg reactive problem with heterogeneous coefficient:
\begin{equation}
\label{eq:example2s}
\begin{cases}
\pt \uo -\dive(\k_{1}\gd\uo)+\bo\cdot\gd\uo& = \ut(1-\uo),\\
\pt \ut -\dive(\k_{2}\gd\ut)+\bt\cdot\gd\ut& = \uo(1-\ut^{2}),
\end{cases}
\end{equation}
the permeability coefficients $\k_{1}$ and $\k_{2}$ are heterogeneous and high-contrast with values of $10^{2}$ in the high-contrast channel constant. Figures~\ref{fig:perm-example2s}--\ref{fig:perm-example2} plot the permeability fields $\k_{1}$ and, $\k_{2}$ respectively. Further, for this example.  The velocities $\bo=\bt$ are given as in the Example~\ref{example2}, while the initial conditions are defined as $\uo(\x,0) = \sin(3\pi x)\sin(2\pi y)$ and  $\ut(\x,0) = \sin(2\pi x)\cos(3\pi y)$. 
The terminal time of this simulation is $T=0.01$, and there are $2^{9}$ time steps. Table \ref{tab:example2s} lists the relative errors with respective to varying parameters $(H,\ell)$ at the terminal time $T$. Similar to Example \ref{example1s}, we notice that the errors in levels $1$ and $2$ decay faster than the first $\ell=0$ as expected for two solutions in both norms.

Finally, Figure~\ref{fig:example2s} depicts the profile of the reference solutions and multiscale solutions at the terminal time $T=0.01$. We observe that the multiscale solutions can capture the multiple scales in the reference solutions. 
\begin{table}
\centering
\subfloat
\centering
\begin{tabular}{ccccccc}
\toprule 
\multicolumn{7}{c}{Multiscale approximation for $\uo$ at $T=0.01$}\tabularnewline
\midrule 
\multirow{2}{*}{$H$} & \multicolumn{2}{c}{$\ell=0$} & \multicolumn{2}{c}{$\ell=1$} & \multicolumn{2}{c}{$\ell=2$}\tabularnewline
\cmidrule{2-7} \cmidrule{3-7} \cmidrule{4-7} \cmidrule{5-7} \cmidrule{6-7} \cmidrule{7-7} 
 & $\varepsilon_{1}$ & $\varepsilon_{0}$ & $\varepsilon_{1}$ & $\varepsilon_{0}$ & $\varepsilon_{1}$ & $\varepsilon_{0}$\tabularnewline
\midrule 
$2^{-1}$ & 6.3216E-01 & 5.5896E-01 & 2.5387E-01 & 1.4123E-01 & 1.0218E-01 & 3.6892E-02 \tabularnewline
\midrule 
$2^{-2}$ & 3.0683E-01 & 1.8847E-01 & 1.7760E-01 & 6.9064E-02 & 1.3019E-01 & 4.0238E-02 \tabularnewline
\midrule 
$2^{-3}$ & 2.4723E-01 & 1.2302E-01 & 1.2478E-01 & 3.4178E-02 & 8.9496E-02 & 1.7929E-02 \tabularnewline
\midrule 
$2^{-4}$ & 1.8703E-01 & 7.3334E-02 & 7.4127E-02 & 1.1668E-02 & 4.4538E-02 & 3.6601E-03 \tabularnewline
\midrule 
$2^{-5}$ & 1.3697E-01 & 3.8714E-02 & 4.4229E-02 & 2.9354E-03 & 1.9524E-02 & 5.1187E-03 \tabularnewline

%\bottomrule
%\end{tabular}
%
%\vspace{1cm}
%\subfloat
%\centering
%\begin{tabular}{ccccccc}
\toprule 
\multicolumn{7}{c}{Multiscale approximation for $\ut$ at $T=0.01$}\tabularnewline
\midrule 
\multirow{2}{*}{$H$} & \multicolumn{2}{c}{$\ell=0$} & \multicolumn{2}{c}{$\ell=1$} & \multicolumn{2}{c}{$\ell=2$}\tabularnewline
\cmidrule{2-7} \cmidrule{3-7} \cmidrule{4-7} \cmidrule{5-7} \cmidrule{6-7} \cmidrule{7-7} 
 & $\varepsilon_{1}$ & $\varepsilon_{0}$ & $\varepsilon_{1}$ & $\varepsilon_{0}$ & $\varepsilon_{1}$ & $\varepsilon_{0}$\tabularnewline
\midrule 
$2^{-1}$ & 4.3368E-01 & 2.9545E-01 & 1.8295E-01 & 9.2601E-02 & 9.1993E-02 & 2.9765E-02 \tabularnewline
\midrule 
$2^{-2}$ & 2.1568E-01 & 8.3863E-02 & 1.5276E-01 & 4.3575E-02 & 1.3065E-01 & 4.2853E-02 \tabularnewline
\midrule 
$2^{-3}$ & 1.8089E-01 & 5.6617E-02 & 1.0221E-01 & 1.9836E-02 & 7.3485E-02 & 1.0946E-02 \tabularnewline
\midrule 
$2^{-4}$ & 1.2553E-01 & 2.7039E-02 & 5.8860E-02 & 6.0603E-03 & 3.6188E-02 & 2.3770E-03 \tabularnewline
\midrule 
$2^{-5}$ & 1.1000E-01 & 1.8510E-02 & 3.7821E-02 & 1.9223E-03 & 1.5972E-02 & 2.8104E-03 \tabularnewline
\bottomrule
\end{tabular}
\caption{\label{tab:example2s}  Relative errors for problem~\eqref{eq:example2s} at the terminal time $T=0.01$ with varying parameters $(H,\ell)$.}
\end{table}

\begin{figure}[!h]
\centering
\subfloat[$\ell=0$.]{{\includegraphics[width=0.25\textwidth]{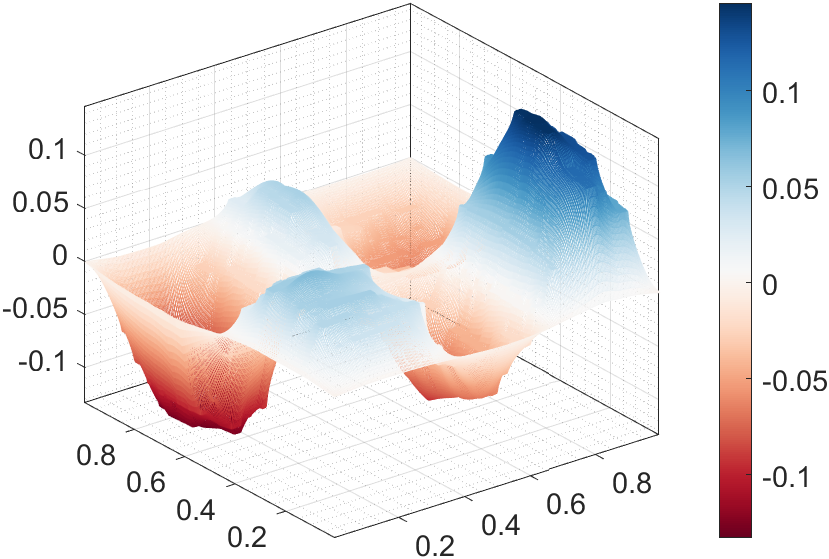}}~{\includegraphics[width=0.25\textwidth]{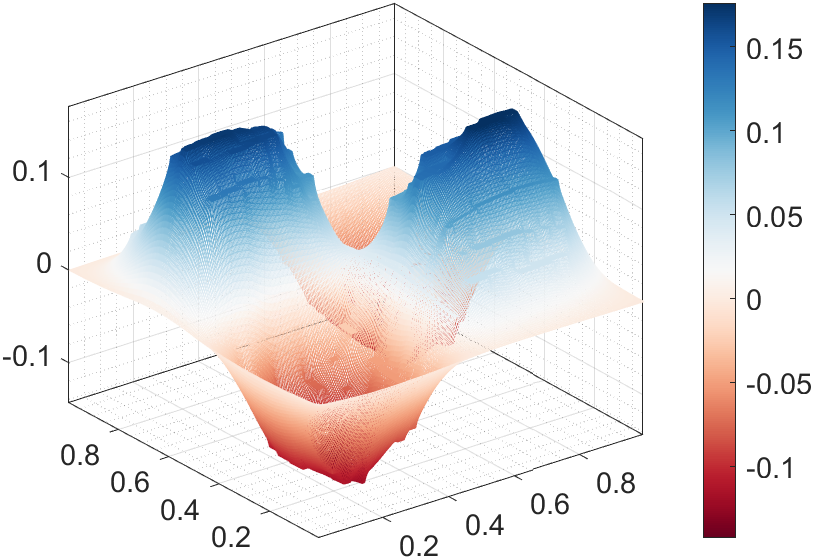}}}~\subfloat[$\ell=1$.]{{\includegraphics[width=0.25\textwidth]{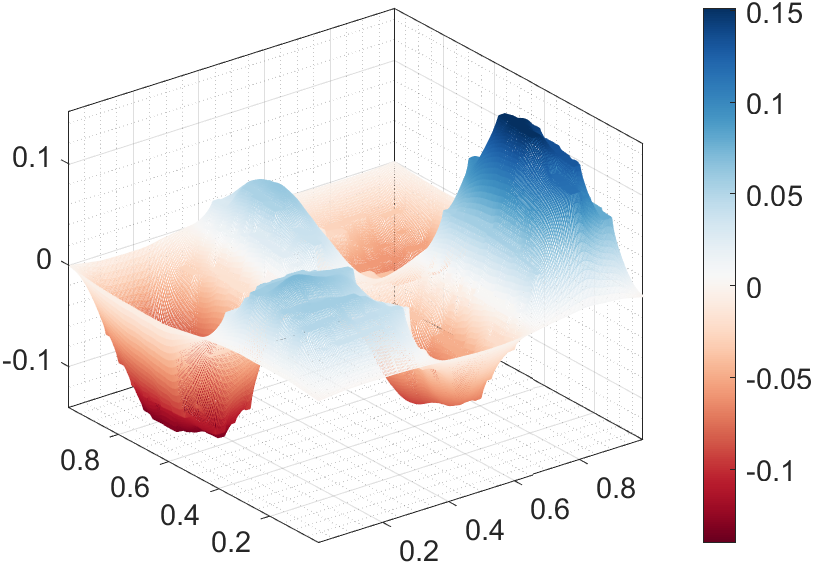}}~{\includegraphics[width=0.25\textwidth]{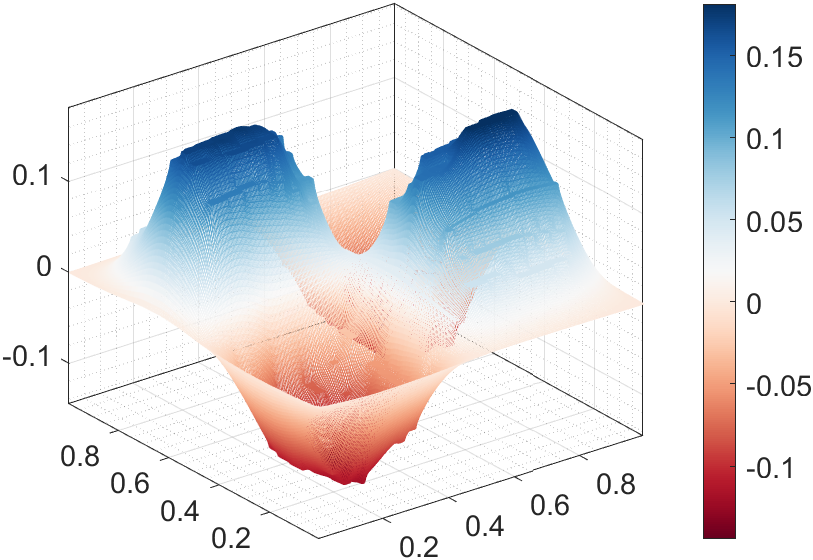}}}\\
	\subfloat[$\ell=2$.]{{\includegraphics[width=0.25\textwidth]{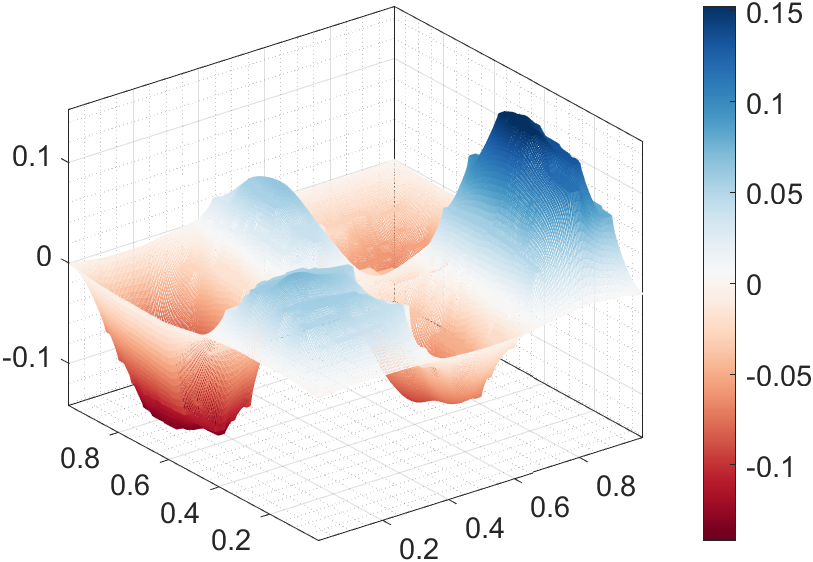}}~{\includegraphics[width=0.25\textwidth]{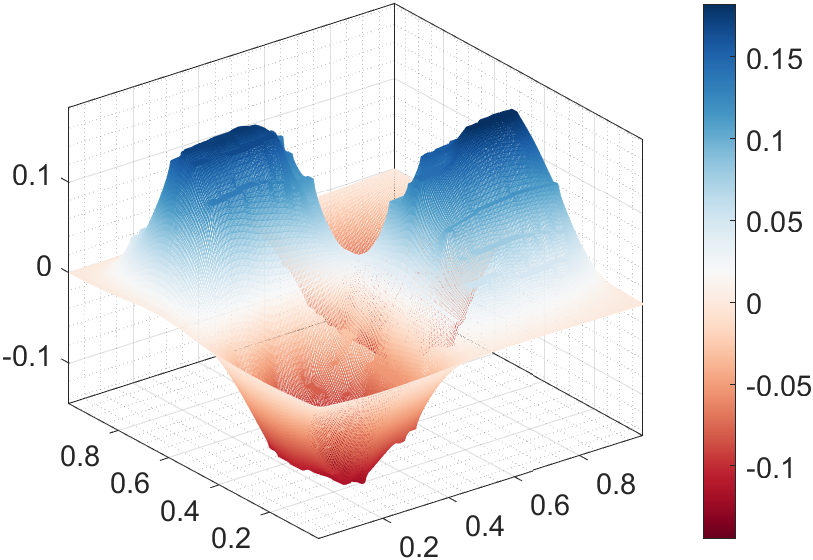}}}~
\subfloat[Reference solution.]{{\includegraphics[width=0.25\textwidth]{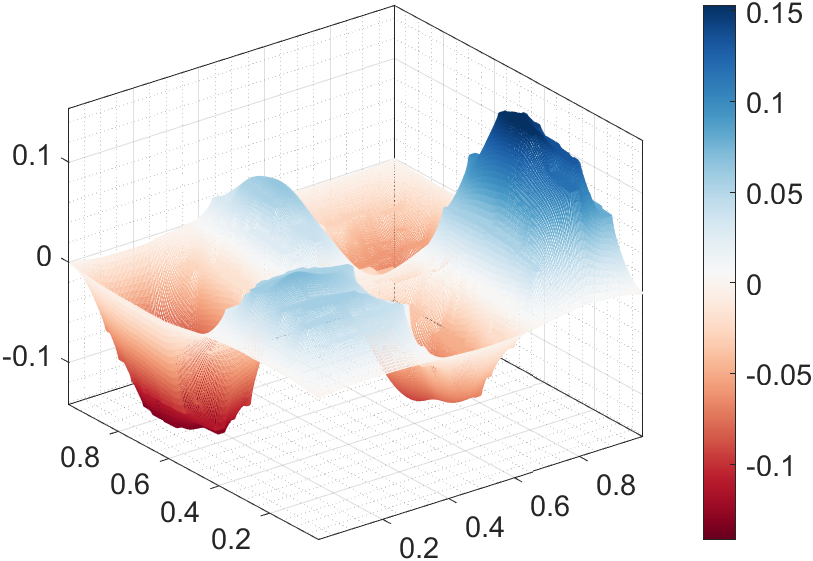}}~{\includegraphics[width=0.25\textwidth]{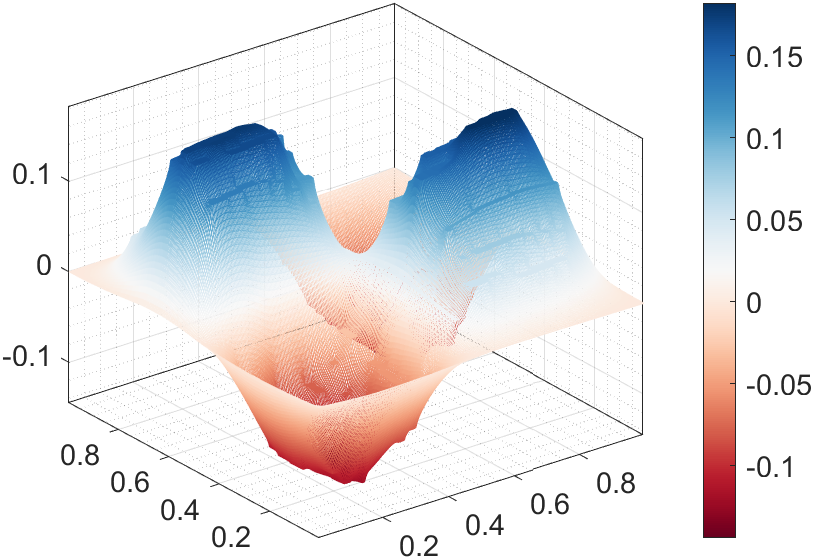}}}
\caption{\label{fig:example2s} Multiscale solution with $2^{9}$ time-steps using $H=2^{-4}$ and (a) $\ell=0$, (b) $\ell=1$ and (c) $\ell=2$,  and (d) the reference solution of problem~~\eqref{eq:example2s} at final time $T=0.01$.}
\end{figure}

\end{example}
%
%\noteLeo{I check this part for the first two examples! It is enough to show the convergence rate in time}\noteLi{Please add one test on the convergence wrt the time stepping size.}
\section{Conclusions}
\label{sec:conclusion}
We have developed an efficient multiscale method to treat the semilinear parabolic problems with a heterogeneous coefficient. The approximation properties of the corresponding steady-state multiscale ansatz space are derived. Since the nonlinear term lies in the reaction term, we can develop a time-independent multiscale ansatz space in this work. The case with the quasilinear term has many important applications, \eg, in the metamaterial, and is more computationally involved and probably requires time-dependent, multiscale ansatz space. We plan to investigate it in the future. 

\section*{Acknowledgement}
The research of Guanglian Li is partially supported by the Hong
Kong RGC Early Career Scheme (Project: 27301921). The research of Eric Chung is partially supported by the Hong Kong RGC General Research Fund (Project: 14304021).

\bibliographystyle{plainnat}
\bibliography{ref_wavelet_edge}
\end{document}